\numberwithin{equation}{section}
\newtheorem{Theorem}{Theorem}[section]
\newtheorem{Lemma}[Theorem]{Lemma}
\theoremstyle{definition}
\newtheorem{Definition}[Theorem]{Definition}
\newtheorem{Remark}[Theorem]{Remark}
\newcommand{\thref}[1]{Theorem \ref{#1}}
\newcommand{\leref}[1]{Lemma \ref{#1}}
\newcommand{\reref}[1]{Remark \ref{#1}}
\newcommand{\deref}[1]{Definition \ref{#1}}
\newcommand{\seref}[1]{Section \ref{#1}}
\newcommand{\ssref}[1]{Subsection \ref{#1}}
\newcommand{\dx}{{\frac{d}{dx}}}
\newcommand{\dxx}{{\frac{d^2}{dx^2}}}
\newcommand{\Cset}{\mathbb{C}}
\newcommand{\Zset}{\mathbb{Z}}
\newcommand{\Nset}{\mathbb{N}}
\newcommand{\Span}{\mathrm{span}}
\newcommand{\Wr}{\mathrm{Wr}}
\newcommand{\al}{{\alpha}}
\newcommand{\be}{{\beta}}
\newcommand{\ga}{{\gamma}}
\newcommand{\la}{{\lambda}}
\newcommand{\de}{{\delta}}
\newcommand{\Ga}{{\Gamma}}
\newcommand{\ep}{{\epsilon}}
\newcommand{\dms}{d\mu_{\mathrm{d}}(s)}
\newcommand{\fh}{\hat{f}}
\newcommand{\ah}{\hat{a}}
\newcommand{\bh}{\hat{b}}
\newcommand{\ch}{\hat{c}}
\newcommand{\cA}{{\mathcal A}}
\newcommand{\cAb}{\bar{{\mathcal A}}}
\newcommand{\cD}{{\mathcal D}}
\newcommand{\cDb}{\bar{{\mathcal D}}}
\newcommand{\cP}{{\mathcal P}}
\newcommand{\cQ}{{\mathcal Q}}
\newcommand{\cR}{{\mathcal R}}
\newcommand{\cM}{{\mathcal M}}
\newcommand{\cJ}{{\mathcal J}}
\newcommand{\cJa}{{\mathcal J}^{(\alpha)}}
\newcommand{\La}{L^{\alpha}}
\newcommand{\Lh}{\hat{L}}
\newcommand{\Lha}{\hat{L}^{\alpha;\beta}}
\newcommand{\Ja}{J^{(\alpha)}}
\newcommand{\Jh}{\hat{J}}
\newcommand{\cJh}{\hat{{\mathcal J}}}
\newcommand{\Bh}{\hat{B}}
\newcommand{\Bb}{\bar{B}}
\newcommand{\Bt}{\tilde{B}}
\newcommand{\fR}{{\mathfrak R}}
\newcommand{\cce}{c^{\varepsilon}}
\newcommand{\bbe}{b^{\varepsilon}}
\newcommand{\aae}{a^{\varepsilon}}
\newcommand{\fD}{\mathfrak{D}}
\begin{document}
\title[Krall-Laguerre commutative algebras of differential operators]{Krall-Laguerre commutative algebras of ordinary differential operators}

\date{July 10, 2011}

\author[P.~Iliev]{Plamen~Iliev}
\address{School of Mathematics, Georgia Institute of Technology,
Atlanta, GA 30332--0160, USA}
\email{iliev@math.gatech.edu}
\thanks{The author is supported in part by NSF Grant \#0901092}

\begin{abstract}
In 1999, Gr\"unbaum, Haine and Horozov defined a large family of commutative algebras of ordinary differential operators which have orthogonal  polynomials as eigenfunctions. These polynomials are mutually orthogonal with respect to a Laguerre-type weight distribution, thus providing solutions to Krall's problem. In the present paper we give a new proof of their result which establishes a conjecture, concerning the explicit characterization of the dual commutative algebra of eigenvalues. In particular, for the Koornwinder's generalization of Laguerre polynomials, our approach yields an explicit set of generators for the whole algebra of differential operators. We also illustrate how more general Sobolev-type orthogonal polynomials fit within this theory. 
\end{abstract}

\maketitle

\tableofcontents

\section{Introduction}\label{se1}
In 1929, Bochner \cite{Bo} characterized the classical orthogonal polynomials (Jacobi, Laguerre, Hermite and Bessel) as the only ones that satisfy a differential equation of the form
\begin{equation}\label{1.1}
B\left(x,\dx\right)p_n(x)=\lambda_n p_n(x),\quad n\in\Nset_0,
\end{equation}
where $B(x,\dx)=\sum_{j=0}^{2}b_j(x)\frac{d^j}{dx^j}$ is a second-order differential operator with coefficients $b_j(x)$ independent of the degree index $n$ and with eigenvalue $\lambda_n$ independent of the variable $x$.
In 1938, H.~L.~Krall \cite{Kr1} posed the general problem to find all families of orthogonal polynomials $\{p_n(x)\}_{n=0}^{\infty}$ which are eigenfunctions of a differential operator $B=\sum_{j=0}^{m}b_j(x)\frac{d^j}{dx^j}$ of arbitrary order. In particular, he showed that the order $m$ must be even and two years later \cite{Kr2} he completed the classification for operators of order $4$, by adding three new families of orthogonal polynomials. During the last 70 years numerous families of orthogonal polynomials which are eigenfunctions of higher-order differential operators $B$ were constructed. 

Inspired by the bispectral problem \cite{DG}, Gr\"unbaum and Haine discovered in \cite{GH} that the new families introduced by Krall can be obtained from the classical ones, by applying the Darboux transformation to the Jacobi recurrence operator. This brought new ``bispectral" techniques in this old problem. Up to a linear change of the variable $x$, all known solutions of Krall's problem belong to one of the families below:
\begin{itemize}
\item[{[i]}] The Laguerre-type polynomials introduced by Gr\"unbaum, Haine and Horozov \cite{GHH} by applying the Darboux transformation to the recurrence operator for the Laguerre polynomials. These polynomials are orthogonal on $[0,\infty)$ with respect to a weight distribution of the form $$w(x)=\frac{1}{\Gamma(\al+1)}x^{\al}e^{-x}+\sum_{\text{finitely many }j\in\Nset_0}(-1)^ju_j\de^{(j)}(x), \text{ where }\al\in\Nset_0.$$
\item[{[ii]}] The Jacobi-type polynomials introduced by Gr\"unbaum and Yakimov \cite{GY} by applying the Darboux transformation to the recurrence operator for the Jacobi polynomials. These polynomials are orthogonal on $[-1,1]$ with respect to one of the following weight distributions:
\begin{align*}
w_1(x)&=\frac{\Gamma(\al+\be+2)}{2^{\al+\be+1}\Gamma(\al+1)\Gamma(\be+1)}(1-x)^{\al}(1+x)^{\beta}\\
&\qquad+\sum_{\text{finitely many }j\in\Nset_0}(-1)^ju_j\de^{(j)}(x-1), \text{ where }\al\in\Nset_0, \quad \beta>-1,
\intertext{or}
w_2(x)&=\frac{\Gamma(\al+\be+2)}{2^{\al+\be+1}\Gamma(\al+1)\Gamma(\be+1)}(1-x)^{\al}(1+x)^{\beta}\\
&\qquad+\sum_{\text{finitely many }j\in\Nset_0}(-1)^ju_j\de^{(j)}(x-1)\\
&\qquad+\sum_{\text{finitely many }j\in\Nset_0}(-1)^jv_j\de^{(j)}(x+1), \text{ where }\al,\be\in\Nset_0.
\end{align*}
\end{itemize}
Both papers use the general bispectral techniques developed in \cite{BHY}. Moreover, the results in \cite{GHH,GY} imply that to each of the families above, we can associate a ``nontrivial'' commutative algebra $\cDb$ of differential operators $B$ such that \eqref{1.1} holds for some $\la_n$. By ``nontrivial'', we mean that the operators in the algebra are not obtained by taking constant coefficient polynomials of a fixed operator.  The proofs in \cite{GHH,GY} guarantee the existence of operators in the algebra $\cDb$ for every sufficiently large even order, thus raising the natural question of giving an explicit characterization of this algebra (and in particular, the operator of minimal order).

Recently \cite{I2}, we studied the algebra of differential operators for the Jacobi-type polynomials and used the results to obtain multivariate extensions of the family [ii] within the context of partial differential operators invariant under rotations. The main goal of the present paper is to explain how these techniques can be adapted for the Laguerre-type polynomials and to establish a conjecture in \cite{GHH} concerning the explicit characterization of the isomorphic algebra $\cAb$ of eigenvalues $\la_n$. We also illustrate how our approach leads to explicit formulas for the differential operators and how more general Sobolev-type orthogonal polynomials fit within the above theory. This leads to a uniform description of a large family of commutative algebras of ordinary differential operators which have Laguerre-type orthogonal polynomials as eigenfunctions, containing all known examples.

A more detailed description of the results in the paper is as follows. Using the Darboux transformation, one can naturally associate to each of the Laguerre-type weights listed in [i] a certain Casorati determinant $\tau(n)\in\Cset[n]$. We denote by $\cA$ the algebra of all $\la_n\in\Cset[n]$ such that $\la_{n}-\la_{n-1}$ is divisible by $\tau(n-1)$ and we prove that $\cA\subset\cAb$ (i.e. for every $\la_n\in\cA$ there exists a differential operator $B$ satisfying \eqref{1.1}). Next, we show that if the parameters $u_j$ do not satisfy a certain algebraic relation, then $\cA=\cAb$. In other words for generic parameters $\{u_j\}$, all possible eigenvalues $\la_n$ in equation \eqref{1.1} are in $\cA$. This fact was observed in several examples in \cite{GHH}. A similar rule applies (generically) in the purely continuous version of the bispectral problem (with the discrete derivative $\la_{n}-\la_{n-1}$ replaced by the usual one), see \cite{DG}. However, our methods and arguments are very different from the ones used in the pioneering work of Duistermaat and Gr\"unbaum. Although we focus mainly on the Laguerre-type polynomials [i], the results apply directly to more general Laguerre-type polynomials orthogonal with respect to a Sobolev-type inner product, which we discuss briefly in the examples. 

The paper is organized as follows. In the next section we introduce the Laguerre polynomials and the basic notations which we use in rest of the paper. In \seref{se3} we briefly review the sequence of Darboux transformations from the recurrence operator of Laguerre polynomials which leads to the families listed in [i] and the construction of $\tau(n)$. We introduce certain free parameters $\{\be_j\}$ which parametrize the Darboux transformations and which are in one to one correspondence with the parameters $\{u_j\}$ appearing in the weight distribution. In \seref{se4} we prove that $\cA\subset\cAb$. In \seref{se5} we show that for generic parameters $\{\be_j\}$ (or equivalently, for generic $\{u_j\}$) we have $\cA=\cAb$. At the end of the paper we list three instructive examples. In the first example, we consider the Koornwinder's generalized Laguerre polynomials \cite{Ko} which correspond to weights of the form 
$$w(x)=\frac{1}{\Gamma(\al+1)}x^{\al}e^{-x}+u_0\de(x), \text{ where }\al\in\Nset_0.$$
The computation of the differential operator of minimal order has a long history and goes back to the works of H.~L.~Krall, A.~M.~Krall and L.~L.~Littlejohn \cite{Kr,KL,Kr1,Kr2,L}. The explicit form of this operator for arbitrary $\al\in\Nset$ was obtained by J.~Koekoek and R.~Koekoek \cite{KK}. Our techniques applied to this case show that $\cA=\cAb$ and yield an explicit set of generators for the whole algebra $\cDb$. In the second example, we consider in a detail the case when 
$$w(x)=\frac{1}{\Gamma(\al+1)}x^{\al}e^{-x}+u_0\de(x)-u_1\de'(x), \text{ where }\al\in\Nset_0.$$
We write an explicit formula for the algebraic equation which allows to determine whether $u_0$ and $u_1$ are generic or not. In particular, when $w(x)=e^{-x}+u_0\de(x)-u_1\de'(x)$ we show that $\cA$ is a proper subalgebra of $\cAb$ when $u_0$ and $u_1$ are not generic. In the third example, we explain how our results apply to Laguerre-type polynomials orthogonal with respect to the Sobolev inner product
\begin{equation*}
\langle f(x), g(x)\rangle=\frac{1}{\Gamma(\al+1)}\int_0^{\infty}f(x)g(x)x^{\al}e^{-x}dx+[f(0),f'(0)]\,A\,\left[\begin{matrix} g(0)\\ g'(0)\end{matrix}\right],
\end{equation*}
where $\al\in\Nset_0$ and $A$ is a symmetric $2\times 2$ matrix.
\section{Laguerre polynomials} \label{se2}
In this section we review briefly the basic properties of Laguerre polynomials. For $n\in\Nset_0$ and $\al>-1$ the Laguerre 
polynomials $\{\La_n(x)\}_{n=0}^{\infty}$ are defined by 
\begin{equation}\label{2.1}
\La_n(x)=\frac{\Ga(\al+n+1)}{\Ga(\al+1)\Ga(n+1)}\,{}_1F_1(-n;\al+1;x).
\end{equation}
Here and in the rest of the paper, we use the standard gamma function $\Ga(x)$ and ${}_1F_1$ denotes the confluent hypergeometric function of the first kind 
$${}_1F_1(a;c;x)=\sum_{k=0}^{\infty}\frac{(a)_k x^k}{(c)_k k!}.$$
It will be convenient to set $\La_{-j}(x)=0$ for every $j\in\Nset$.
The polynomials $\La_n(x)$ are mutually orthogonal on $[0,\infty)$ with respect to the gamma distribution $x^{\al}e^{-x}dx$. Explicitly, we have
\begin{equation*} 
\int_{0}^{\infty}\La_n(x)\La_m(x)x^{\al}e^{-x}dx=\frac{\Ga(\al+n+1)}{n!}\,\de_{n,m}.
\end{equation*}
What is important for us is that the Laguerre polynomials are simultaneously eigenfunctions of a second-order difference operator in the discrete variable $n$, which is independent of $x$, and of a differential operator in the continuous variable $x$, which is independent of $n$. More precisely, they solve the discrete-continuous bispectral problem
\begin{subequations}\label{2.2}
\begin{align}
-(n+1)\La_{n+1}(x)+(2n+\al+1)\La_n(x)-(n+\al)\La_{n-1}(x)&=x\La_{n}(x)\label{2.2a}\\
-x\frac{d^2 \La_n(x)}{dx^2}-(\al+1-x)\frac{d \La_n(x)}{dx}&=n\La_n(x).\label{2.2b}
\end{align}
\end{subequations}
From the explicit formula \eqref{2.1} one can easily deduce the following differential-difference relation
\begin{equation}\label{2.3}
\frac{d}{dx}\left[\La_n(x)-\La_{n-1}(x)\right]=-\La_{n-1}(x).
\end{equation}

By abuse of notation, we shall identify a function $f_n$ defined on $\Nset_0$ with the corresponding semi-infinite vector $[f_0,f_1,f_2,\dots]^t$. In particular, if we think of $\La_n(x)$ as the semi-infinite vector 
$$[\La_0(x),\La_1(x),\La_2(x),\dots]^{t}$$ 
then the recurrence relations \eqref{2.2a} can be written in the compact form 
\begin{equation}\label{2.4}
\Ja\La_n(x)=x\La_n(x),
\end{equation}
where $\Ja$ is the tridiagonal semi-infinite (Jacobi) matrix
\begin{subequations}\label{2.5}
\begin{equation}\label{2.5a}
\Ja=\left[\begin{matrix}
b_0&a_0\\
c_1&b_1&a_1\\
   &c_2&b_2&a_2\\
   &   &\ddots&\ddots&\ddots
\end{matrix}\right],
\end{equation}
with entries 
\begin{equation}\label{2.5b}
a_n=-(n+1), \quad b_n=2n+\al+1 , \quad c_n=-(n+\al).
\end{equation}
\end{subequations}

\section{Darboux transformations}\label{se3}
In this section, following \cite{GHH}, we describe the result of $k$ successive Darboux transformations starting from $\Ja$ for $\al\in\Nset$ and 
$k\leq\al$.

First we review briefly the lattice version of the Darboux transformation for bi-infinite matrices. 
Let $\cJ_0$ be a tridiagonal bi-infinite matrix with nonzero off-diagonal entries. If we factor $\cJ_0$ as a product of an upper-triangular and a lower-triangular matrices, then we can  produce a new matrix by exchanging the factors. Following  \cite{MS}, we shall refer to this operation as the Darboux transformation. If we iterate this process $k$ times, we obtain a new tridiagonal bi-infinite matrix $\cJh$ as follows
\begin{align}
&\cJ_0=\cP_0\cQ_0\curvearrowright \cJ_1=\cQ_0\cP_0=\cP_1\cQ_1
\curvearrowright\cdots                      \nonumber\\
&\qquad \cJ_{k-1}=\cQ_{k-2}\cP_{k-2}=\cP_{k-1}\cQ_{k-1}
\curvearrowright \cJh=\cJ_k=\cQ_{k-1}\cP_{k-1}.   \label{3.1}
\end{align}
From \eqref{3.1} it follows that 
\begin{equation}\label{3.2}
\cJh \cQ=\cQ\cJ_0
\end{equation}
and 
\begin{equation}\label{3.3}
\cJ_0^k=\cP\cQ, 
\end{equation}
where $\cP=\cP_0\cP_1\cdots \cP_{k-1}$ and $\cQ=\cQ_{k-1}\cQ_{k-2}\cdots \cQ_{0}$. Similarly to the semi-infinite case, we shall identify a function $f_n$ defined for $n\in\Zset$ with the corresponding bi-infinite vector $[\dots, f_{-2},f_{-1},f_0,f_1,f_{2},\dots]^t$. Thus we can think of the bi-infinite matrices as linear transformations acting on functions $f_n$ by matrix multiplication. 
Equations \eqref{3.2} and \eqref{3.3} imply that 
\begin{equation}\label{3.4}
\ker(\cQ)\subset\ker(\cJ_0^k)\text{ and }\cJ_0(\ker(\cQ))\subset\ker(\cQ).
\end{equation}
Conversely, one can show that if \eqref{3.4} holds then there exists a tridiagonal matrix $\cJh$ which 
is obtained by a sequence of Darboux transformations as in \eqref{3.1}.

The lower-triangular matrix $\cQ$ is uniquely determined by its kernel, up to a multiplication by a diagonal matrix on the left. More precisely, if $\{\psi^{(j)}_n\}_{j=0}^{k-1}$ is a basis for 
$\ker(\cQ)$ and $f_n$ is an arbitrary function then 
\begin{equation}\label{3.5}
\cQ f_n= g_n\Wr_n(\psi^{(0)}_n,\psi^{(1)}_n,\dots,\psi^{(k-1)}_n,f_n),
\end{equation}
for an appropriate function $g_n$. We use $\Wr_n$ to denote the discrete Wronskian (or Casorati determinant):
$$\Wr_n(g^{(1)}_n,g^{(2)}_n,\dots,g^{(k)}_n)=\det(g^{(i)}_{n-j+1})_{1\leq i,j\leq k}.$$
Combining the above remarks, we see that the sequence of Darboux 
transformations \eqref{3.1} for $\cJ_0$ is characterized by choosing a 
basis for $\ker(\cQ)$ satisfying
\begin{equation}\label{3.6}
\cJ_0\psi^{(0)}_n=0\text{ and }\cJ_0\psi^{(j)}_n=\psi^{(j-1)}_n
\text{ for }j=1,\dots,k-1.
\end{equation}

We want to apply the above construction with $\cJ_0=\cJa$ where $\cJa$ is an appropriate bi-infinite extension of 
the semi-infinite Jacobi matrix $\Ja$ for Laguerre polynomials. We define
\begin{equation*}
\cJa=\left[\begin{matrix}
\ddots&\ddots&\ddots\\
&\cce_{-1}&\bbe_{-1}&\aae_{-1}&\\
 &&\cce_0&\bbe_0&\aae_0\\
&&&\cce_1&\bbe_1&\aae_1\\
  & &&   &\ddots&\ddots&\ddots
\end{matrix}\right],
\end{equation*} 
where 
$$\aae_n=a_{n+\varepsilon}=-(n+\varepsilon+1), \quad \bbe_n=b_{n+\varepsilon}=2(n+\varepsilon)+\al+1, \quad \cce_n=c_{n+\varepsilon}=-(n+\varepsilon+\al).$$
Since $\al\in\Nset$ we see that for $\varepsilon\notin \Zset$, the off-diagonal entries $\aae_n$ and $\cce_n$ are nonzero. 

For $j\in\{0,1,\dots,k-1\}$ we define
\begin{equation}\label{3.7}
\phi^{1,j}_n=\frac{(-1)^j}{(1-\al)_j}\binom{n+j}{j}\text{ and }
\phi^{2,j}_n=\frac{(-1)^j}{j!}\binom{n+\al+j}{\al+j}.
\end{equation}
It is easy to show that the functions $\{\phi^{i,j}_{n+\varepsilon}\}$ of the variable $n\in\Zset$ are linearly independent and satisfy
\begin{align*}
&\cJa\phi^{i,0}_{n+\varepsilon}=0,\text{ for }i=1,2\\
&\cJa\phi^{i,j}_{n+\varepsilon}=\phi^{i,j-1}_{n+\varepsilon},
\text{ for }j=1,\dots,k-1, \quad i=1,2.
\end{align*}
Thus, we can write the functions $\psi^{(j)}_n$ as a linear combination of 
$\phi^{i,j}_{n+\varepsilon}$ as follows
\begin{equation*}
\psi^{(j)}_n=\sum_{l=0}^{j}(\be_{j-l}\phi^{1,l}_{n+\varepsilon}+\ga_{j-l}\phi^{2,l}_{n+\varepsilon}).
\end{equation*}

If $\ga_0=0$ then $\psi^{(0)}_n=\be_0\neq 0$, i.e. we can take 
$\psi^{(0)}_n=1$. One can check that $\cJ_1$ in \eqref{3.1} coincides 
(up to a conjugation by a diagonal matrix) with $\cJ^{(\al-1)}$. This means that the operator 
$\cJh=\cJ_k$ can be obtained by a sequence of $(k-1)$ Darboux transformations 
starting from $\cJ^{(\al-1)}$. 
Thus we can assume that 
$\ga_0\neq 0$, hence we can take $\ga_0=1$. Since $\cQ$ depends only on the space
$\Span\{\psi^{(0)}_n,\psi^{(1)}_n,\dots, \psi^{(k-1)}_n\}$, and not on the choice
of the specific basis, we can can take $\ga_j=0$ for $j>0$. Thus we shall 
consider a basis for $\ker(\cQ)$ of the form
\begin{equation}\label{3.8}
\psi^{(j)}_n=\sum_{l=0}^{j}\be_{j-l}\phi^{1,l}_{n+\varepsilon}+\phi^{2,j}_{n+\varepsilon},
\end{equation}
depending on $\al\in\Nset$ and $k$ free parameters $\be_0,\be_1,\dots,\be_{k-1}$. We shall also 
normalize the matrix $\cQ$ by taking $g_n=1$ in \eqref{3.5}.

Now we consider the limit $\varepsilon\rightarrow 0$. Since $\lim_{\varepsilon\rightarrow 0}a_{-1+\varepsilon}=0$, it follows that the intertwining relation \eqref{3.2} holds 
for the semi-infinite parts of the bi-infinite matrices $\cJ_{0}$, $\cJh$ and $\cQ$. In other words, we have
\begin{equation}\label{3.9}
\Jh Q=Q \Ja,
\end{equation}
where  $\Ja$  is the semi-infinite matrix in \eqref{2.5}, $\Jh$ is a similar semi-infinite Jacobi matrix
$$\Jh=\left[\begin{matrix}
\bh_0&\ah_0\\
\ch_1&\bh_1&\ah_1\\
   &\ch_2&\bh_2&\ah_2\\
   &   &\ddots&\ddots&\ddots
\end{matrix}\right],$$ 
and $Q$ is a lower-triangular semi-infinite matrix, acting on functions $f_n$ by 
\begin{equation}\label{3.10}
Qf_n= \Wr_n(\psi^{(0)}_n,\psi^{(1)}_n,\dots,\psi^{(k-1)}_n,f_n),
\end{equation} 
where
\begin{equation}\label{3.11}
\psi^{(j)}_n=\sum_{l=0}^{j}\be_{j-l}\phi^{1,l}_{n}+\phi^{2,j}_{n},
\end{equation}
with the convention that $f_j=0$ when $j<0$.

One can easily write explicit formulas for the entries of the matrix $\Jh$. Indeed, if we consider the $k\times k$ determinants defined by
\begin{equation}\label{3.12}
\tau(n)=\Wr_n(\psi^{(0)}_n,\psi^{(1)}_n,\dots,\psi^{(k-1)}_n)
\end{equation}
and 
\begin{equation}\label{3.13}
\rho(n)=
\left\vert\begin{matrix}
\psi^{(0)}_n & \psi^{(1)}_n &\dots &\psi^{(k-1)}_n\\
\psi^{(0)}_{n-2} & \psi^{(1)}_{n-2} &\dots &\psi^{(k-1)}_{n-2}\\
\psi^{(0)}_{n-3} & \psi^{(1)}_{n-3} &\dots &\psi^{(k-1)}_{n-3}\\
\vdots                &                             &  &\vdots   \\

\psi^{(0)}_{n-k} & \psi^{(1)}_{n-k} &\dots &\psi^{(k-1)}_{n-k}
\end{matrix}\right\vert,
\end{equation}
then using equations \eqref{2.5}, \eqref{3.2} and \eqref{3.9} one can deduce that 
\begin{subequations}\label{3.14}
\begin{align}
\ah_n&=-\frac{\tau(n-1)}{\tau(n)}(n+1),\label{3.14a}\\
\bh_n&=2n+\al+1-\frac{\rho(n+1)}{\tau(n)}(n+1)+\frac{\rho(n)}{\tau(n-1)}n,\label{3.14b}\\
\ch_n&=-\frac{\tau(n)}{\tau(n-1)}(n+\al-k).\label{3.14c}
\end{align}
\end{subequations}
In particular, from \eqref{3.7} and the above formulas we see that the entries of the matrix $\Jh$ are well defined rational functions of $n$ when 
\begin{equation}\label{3.15}
\tau(n)\neq 0 \text{ for }n=-1,0,1,2,\dots.
\end{equation}
We shall call the set of parameters $\be=(\be_0,\be_1,\dots,\be_{k-1})$ {\em admissible} if \eqref{3.15} holds.  In the rest of the paper we shall work with admissible $\be$. One can show that
$$\tau(-1)=\frac{(-1)^{\binom{k}{2}}\be_0^k}{\prod_{j=1}^{k-1}(\al-j)^{k-j}},$$
which means that $\tau(-1)\neq0$ is equivalent to $\be_0\neq0$.

Finally, we 
denote by $\Lha_n(x)$ the polynomials defined by 
\begin{equation}\label{3.16}
\Lha_n(x)=Q \La_n(x)=
\Wr_n(\psi^{(0)}_n,\psi^{(1)}_n,\dots,\psi^{(k-1)}_n,\La_n(x)),
\end{equation}
which depend on the free parameters $\al\in\Nset$,  and 
$\be=(\be_0,\be_1,\dots,\be_{k-1})$. 

From \eqref{2.4}, \eqref{3.9}, \eqref{3.10} and \eqref{3.16} it follows 
that 
\begin{equation}\label{3.17}
\Jh \Lha_n(x)=x\Lha_n(x).
\end{equation}
From equations \eqref{3.14a} and \eqref{3.14c} we see that the off-diagonal entries $\ah_n$ and $\ch_n$ of the matrix $\Jh$ are 
nonzero. Therefore, by Favard's theorem, there exists a unique (up to a
multiplicative constant) moment functional $\cM$ for which 
$\{\Lha_n(x)\}_{n=0}^{\infty}$ is an orthogonal sequence, i.e.
\begin{equation}\label{3.18}
\cM(\Lha_n(x) \Lha_m(x))=0, \text{ for }n\neq m \text{ and }
\cM\left((\Lha_n(x))^2\right) \neq 0.
\end{equation}
More precisely, one can show that there exist constants 
$u_0,u_1,\dots,u_{k-1}$ 
such that the moment functional $\cM$ is given by the weight distribution 
\begin{equation}\label{3.19}
w(x)=\frac{1}{(\al-k)!}x^{\al-k}e^{-x}+\sum_{j=0}^{k-1}(-1)^ju_j\de^{(j)}(x),
\end{equation}
where $\de$ is the Dirac delta function. The parameters $\{u_j\}$ correspond to a 
different parametrization of the sequence of Darboux transformations \eqref{3.1}, see \cite[Theorem 2, p.~287]{GHH}. We shall continue to work with the parameters $\{\be_j\}$ and we give the explicit connection between $\{\be_j\}$ and $\{u_j\}$ in the examples when $k=1$ and $k=2$.

\begin{Remark}\label{re3.1}
In the proofs of the main results (\thref{th4.1} and \thref{th5.3}) we shall only use the fact that the functions $\psi^{(j)}_n$ are polynomials of $n$ (which follows immediately from equations \eqref{3.7} and \eqref{3.11}), but not their explicit form. Thus all statements will hold if we use \eqref{3.16} and arbitrary polynomials $\psi^{(j)}_n$ to define extensions of Laguerre polynomials. However, these polynomials will not (in general) satisfy the three-term recurrence relation \eqref{3.17} (hence they will not be orthogonal with respect to a moment functional). One can show that these polynomials will satisfy a higher-order recurrence relation. Indeed, note that if $f_n$ is a polynomial in $n$ of degree $s$, then $[\cJa]^{s+1}f_{n+\varepsilon}=0$. Using this fact we see that if we define $Q$ by \eqref{3.10} with polynomial functions $\psi^{(j)}_n$, then there exist $s\in\Nset$ and a semi-infinite $(2s+1)$-band matrix $\Jh$ which is zero everywhere, except for $s$ consecutive subdiagonals on either side of the main diagonal such that 
\begin{equation*}
\Jh Q=Q [\Ja]^{s}.
\end{equation*}
The above relation shows that the corresponding polynomials defined by \eqref{3.16} will satisfy a higher-order recurrence relation analogous to \eqref{3.17}. Interesting examples of such polynomials, satisfying a Sobolev-type orthogonality were studied by numerous authors, see for instance \cite{DM} and the references therein. The higher-order recurrence relations can naturally be connected to theory of matrix-valued orthogonal polynomials \cite{DurG} and the Toda lattice \cite{AvM}. We discuss the commutative algebra of differential operators for a specific family of Sobolev-type polynomials in the last section which extends the polynomials considered in \cite{KKB}.
\end{Remark}

\section{The commutative algebras $\cA$ and $\cD$}\label{se4}

In this section we prove that the polynomials $\Lha_n(x)$ defined in equation \eqref{3.16} are eigenfunctions 
for all operators in a commutative algebra of differential operators.

Let $\fD$ denote the associative algebra of ordinary differential operators generated by the operators $D_1=\dx$ and $D_2=x\dxx-x\dx$, i.e.
$$\fD=\Cset\left\langle \dx\,,\, x\dxx-x\dx \right\rangle.$$
Thus $\fD$ is an associative algebra generated by two elements satisfying the commutativity relation $[D_1,D_2]=D_1^2-D_1$.

Note that equation \eqref{2.2b} can be rewritten as 
\begin{equation}\label{4.1}
B \La_n(x)=n\La_n(x), \text{ where }B=-x\dxx-(\al+1-x)\dx\in\fD.
\end{equation}

Recall that $\tau(n)$ defined by \eqref{3.12} is a polynomial of $n$. We denote by $\cA$ the algebra of all polynomials $h(n)$ such that 
$h(n)-h(n-1)$ is divisible by $\tau(n-1)$ in $\Cset[n]$:
\begin{equation}\label{4.2}
\cA=\left\{h(n)\in\Cset[n]:\frac{h(n)-h(n-1)}{\tau(n-1)} \in\Cset[n]\right\}.
\end{equation}
It is not hard to see that $\cA$ contains a polynomial of every degree 
greater than $\deg(\tau)$. We can define the elements of $\cA$ as ``discrete" integrals of polynomials divisible 
by $\tau(n)$. To make this more precise we introduce the following notation: 
for $m,n\in\Zset$ and for a function $f(s)$ defined on $\Zset$ it will 
be convenient to use the following notation
\begin{equation*}
\int_{m}^{n}f(s)\dms=\begin{cases}
\sum_{s=m+1}^{n}f(s) &\text{if }n>m\\
0 &\text{if }n=m\\
-\sum_{s=n+1}^{m}f(s)&\text{if }n<m.
\end{cases}
\end{equation*}
Thus 
\begin{equation*}
\int_m^{n}f(s)\dms=f(n)+\int_m^{n-1}f(s)\dms\text{ for all }m, n\in\Zset.
\end{equation*}
Note that if $h\in\cA$ then 
\begin{equation}\label{4.3}
g(n)=\frac{h(n)-h(n-1)}{\tau(n-1)} \in\Cset[n]
\end{equation}
and 
\begin{equation}\label{4.4}
h(n)=\int_{0}^{n}g(s)\tau(s-1)\dms+h(0)=\int_{-1}^{n-1}g(s+1)\tau(s)\dms+h(0).
\end{equation}
With the above notations we can formulate the main result in this section.
\begin{Theorem}\label{th4.1}
For every $h\in\cA$ there exists $\Bh_h\in\fD$ such that
\begin{equation}\label{4.5}
\Bh_h\Lha_n(x)=h(n)\Lha_n(x), 
\end{equation}
for all $n\in\Nset_0$. Thus, $\cD=\{\Bh_h:h\in \cA\}$ is a commutative subalgebra of $\fD$, isomorphic 
to $\cA$.
\end{Theorem}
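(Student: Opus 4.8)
The plan is to transfer the eigenvalue equation \eqref{4.5} into an equation between semi-infinite matrices acting on the discrete index $n$, and then to identify the algebra $\cA$ with exactly the condition that makes the resulting matrix represent a differential operator from $\fD$.

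First I would set up a dictionary. For $P\in\fD$ the polynomial $P\La_n(x)$ has degree $\le n$, so $P\La_n=\sum_{j\le n}(M_P)_{n,j}\La_j(x)$ for a unique lower-triangular semi-infinite matrix $M_P$; the assignment $P\mapsto M_P$ is linear, injective (a differential operator annihilating every $\La_n$, hence every polynomial, is $0$), and satisfies $M_{PR}=M_RM_P$. By \eqref{4.1} one has $M_B=N:=\mathrm{diag}(0,1,2,\dots)$, and telescoping \eqref{2.3} to $\dx\La_n=-\sum_{j<n}\La_j$ shows that $\dx$ is represented by the matrix $\cL$ with $\cL_{n,j}=-1$ for $j<n$ and $0$ otherwise. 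The key structural step is then to pin down the image: $B^a$ gives the diagonal matrix with entries $n^a$, while $B^b\,\dx\,B^a$ maps to the strictly lower matrix with entries $-n^aj^b$ ($j<n$), so by taking linear combinations $M_P$ ranges over \emph{all} lower-triangular matrices whose diagonal is a polynomial in $n$ and whose $(n,j)$-entry with $j<n$ is the restriction to $\{j<n\}$ of a polynomial in $(n,j)$ (conversely every $M_P$ has polynomial entries, since products of $N$ and $\cL$ produce Faulhaber-type polynomial entries). Thus an operator $\Bh_h\in\fD$ with property \eqref{4.5} exists \emph{iff} the corresponding matrix has this polynomial form.

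Next I would convert \eqref{4.5} into a matrix identity. Writing $\Lha_n=Q\La_n$ with $Q$ the lower-triangular intertwiner \eqref{3.16} and expanding both sides in the basis $\{\La_l\}$, one gets $\Bh_h\Lha_n=\sum_l(QM_{\Bh_h})_{n,l}\La_l$, so by linear independence \eqref{4.5} is equivalent to $QM_{\Bh_h}=\Lambda_hQ$ with $\Lambda_h=\mathrm{diag}(h(n))$. Expanding the Casorati determinant \eqref{3.16} along its last column shows that $Q$ is $(k+1)$-banded with polynomial entries along each diagonal and with $Q_{n,n}=\pm\tau(n-1)$, which is nonzero by admissibility \eqref{3.15}; hence $Q$ is invertible among lower-triangular matrices, the identity has the unique solution $M:=Q^{-1}\Lambda_hQ$, and its diagonal equals $h(n)$. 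Existence of $\Bh_h$ is therefore reduced to the single assertion that the entries of $M$ are jointly polynomial in $(n,j)$.

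The hard part is precisely this polynomiality, and it is exactly where the definition of $\cA$ enters. Since $Q^{-1}$ is lower-triangular with diagonal $\pm1/\tau(n-1)$, the entries of $M$ are a priori rational in $n$ with denominators assembled from the values of $\tau$; the hypothesis $h\in\cA$, i.e. $g(n):=(h(n)-h(n-1))/\tau(n-1)\in\Cset[n]$, is the precise condition that cancels these denominators. I would argue by induction on the band offset $d=n-j$: for $d=0$ the entry is $h(n)$, and the first subdiagonal already comes out as $M_{n,n-1}=\pm Q_{n,n-1}\,g(n)$, a polynomial. The equation $QM=\Lambda_hQ$ at $(n,j)$ expresses $\tau(n-1)M_{n,j}$ in terms of the $Q_{n,m}$ and the lower-offset entries $M_{m,j}$ (already polynomial by induction), so the whole content is that the factor $\tau(n-1)$ divides the resulting polynomial in $\Cset[n,j]$; here the telescoping is organized by the discrete-integral formula \eqref{4.4}, $h(n)=\int_{-1}^{n-1}g(s+1)\tau(s)\dms+h(0)$, and one uses (as in \reref{re3.1}) only that the $\psi^{(j)}_n$, hence the minors of $Q$ and $\tau(n)$ of \eqref{3.12}, are polynomials in $n$. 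Verifying that every $\tau$ in the denominator is absorbed, uniformly in the offset $d$, is the main obstacle and should consume the bulk of the computation.

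Finally, the algebraic conclusion is routine. Having produced the unique $\Bh_h\in\fD$ with $M_{\Bh_h}=M$ for each $h\in\cA$, the assignment $h\mapsto\Bh_h$ is well defined because an operator is determined by its eigenvalues on the linearly independent system $\{\Lha_n\}$ (an orthogonal sequence by \eqref{3.18}). It is linear, and multiplicative: $\Bh_{h_1}\Bh_{h_2}$ acts on each $\Lha_n$ as $h_1(n)h_2(n)$, and since $\cA$ is closed under products, $h_1h_2\in\cA$, so uniqueness forces $\Bh_{h_1}\Bh_{h_2}=\Bh_{h_1h_2}=\Bh_{h_2}\Bh_{h_1}$. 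Hence $\cD=\{\Bh_h:h\in\cA\}$ is a commutative subalgebra of $\fD$, and $h\mapsto\Bh_h$ is injective (equal operators have equal eigenvalue polynomials), giving the claimed isomorphism $\cA\cong\cD$.
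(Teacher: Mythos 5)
Your reduction is correct and cleanly organized: the dictionary $P\mapsto M_P$ between $\fD$ and the lower-triangular matrices whose diagonal is a polynomial in $n$ and whose strictly lower part is the restriction of a polynomial in $(n,j)$ is accurate (with $M_{PR}=M_RM_P$, $M_B=\mathrm{diag}(0,1,2,\dots)$ and $M_{\frac{d}{dx}}$ the strictly lower matrix of $-1$'s), and the equivalence of \eqref{4.5} with the polynomiality of $M=Q^{-1}\Lambda_h Q$ is a legitimate reformulation of the theorem. The closing paragraph on commutativity and the isomorphism $\cA\cong\cD$ is also fine. But the argument stops exactly where the theorem begins. You verify the offsets $d=0$ and $d=1$, where divisibility by $\tau(n-1)$ is immediate from the definition \eqref{4.2} of $\cA$, and then declare that absorbing every $\tau$ in the denominator ``uniformly in the offset $d$'' is ``the main obstacle''. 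That obstacle \emph{is} the theorem. For $d\ge 2$ the band recursion gives $\tau(n-1)M_{n,j}=\pm\bigl(h(n)Q_{n,j}-\sum_{m=n-k}^{n-1}Q_{n,m}M_{m,j}\bigr)$, and there is no visible reason why $\tau(n-1)$ should divide the right-hand side: the inductively known entries $M_{m,j}$ are polynomials built from minors of $Q$, and the required divisibility is a genuine Casoratian identity, not a formal consequence of $h\in\cA$ or of the telescoping formula \eqref{4.4}.

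The paper supplies precisely this missing identity, by a constructive route rather than a matrix reduction: \leref{le4.2} (a discrete analogue of Reach's lemma) exhibits a function $F_n$, built from discrete integrals against $g(s+1)=\bigl(h(s+1)-h(s)\bigr)/\tau(s)$, whose Casoratian with $\psi^{(0)}_n,\dots,\psi^{(k-1)}_n$ equals $(h(n)+c)\Lha_n(x)$; then \reref{re4.3} and \leref{le4.4} show that $F_n=\Bh\,\La_n(x)$ for some $\Bh\in\fD$, using the bispectral relations \eqref{4.1} and \eqref{2.3}, after which $\Bh$ is pulled through the Casoratian via \eqref{3.16}. If you want to complete your route, you would need an equivalent determinant identity (Reach's lemma, or a Desnanot--Jacobi argument in the spirit of \leref{le5.4}) to establish the divisibility at every offset; as written, the central claim is asserted rather than proved.
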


For the proof of \thref{th4.1} we shall need two lemmas. 
First we formulate a discrete analog of a lemma due to Reach \cite{R}. 
\begin{Lemma}\label{le4.2}
Let $f^{(0)}_{n}, f^{(1)}_{n},\dots,f^{(k+1)}_{n}$ be functions of a discrete 
variable $n$. Fix $n_1,n_2,\dots,n_{k+1}\in\Zset$ and let 
\begin{equation}\label{4.6}
F_n=\sum_{j=1}^{k+1}(-1)^{k+1+j}f^{(j)}_n\int_{n_j}^{n}f^{(0)}_{s}
\Wr_s(f^{(1)}_{s},\dots,\fh^{(j)}_{s},
\dots,f^{(k+1)}_{s})\dms,
\end{equation}
with the usual convention that the terms with hats are omitted. 
Then 
\begin{equation}\label{4.7}
\begin{split}
\Wr_n(f^{(1)}_{n},\dots,f^{(k)}_{n},F_n)=&
\int_{n_{k+1}}^{n-1}f^{(0)}_{s}\Wr_s(f^{(1)}_{s},\dots,f^{(k)}_{s})\dms\\
&\times \Wr_n(f^{(1)}_{n},\dots,f^{(k+1)}_{n}).
\end{split}
\end{equation}
\end{Lemma}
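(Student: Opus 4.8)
The plan is to compute the $(k+1)\times(k+1)$ Casorati determinant on the left of \eqref{4.7} directly, processing its last row --- the entries $F_{n-j+1}$ for $j=1,\dots,k+1$ --- by multilinearity. The discrete analogues of the two ingredients in Reach's continuous argument will be: (i) the additivity of the discrete integral $\int$, which plays the role of the fundamental theorem of calculus, and (ii) the vanishing of a Casorati determinant having two equal rows, which plays the role of the vanishing of a Wronskian with a repeated column. Throughout I will rename the summation index in \eqref{4.6} from $j$ to $l$, so that $j$ is free to denote the column index of the Casoratian, writing $F_{n-j+1}=\sum_{l=1}^{k+1}(-1)^{k+1+l}f^{(l)}_{n-j+1}\,I_l(n-j+1)$ with $I_l(m)=\int_{n_l}^{m}f^{(0)}_s\Wr_s(f^{(1)}_s,\dots,\hat{f}^{(l)}_s,\dots,f^{(k+1)}_s)\dms$.

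First I would split each integral using $\int_{n_l}^{n-j+1}f\dms=\int_{n_l}^{n}f\dms-\int_{n-j+1}^{n}f\dms$, which writes each last-row entry as a difference $A_j-B_j$, where $A_j=\sum_{l}(-1)^{k+1+l}f^{(l)}_{n-j+1}\,I_l(n)$ carries the $j$-independent coefficients $I_l(n)$, and $B_j$ carries the partial integrals $\int_{n-j+1}^{n}$. By multilinearity in the last row the left side of \eqref{4.7} equals $\det[\,\cdots;A\,]-\det[\,\cdots;B\,]$. For the $A$-part, the summands with $l=1,\dots,k$ are constant ($j$-independent) multiples of rows already present in the determinant, so they drop out after row operations; only $l=k+1$ survives, and since omitting $f^{(k+1)}$ leaves $\Wr_s(f^{(1)},\dots,f^{(k)})$, this gives $\det[\,\cdots;A\,]=I_{k+1}(n)\,\Wr_n(f^{(1)},\dots,f^{(k+1)})$ with $I_{k+1}(n)=\int_{n_{k+1}}^{n}f^{(0)}_s\Wr_s(f^{(1)},\dots,f^{(k)})\dms$.

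The heart of the argument --- and the step I expect to be the main obstacle --- is the $B$-part. Here I would recognize the inner sum $\sum_{l=1}^{k+1}(-1)^{k+1+l}f^{(l)}_{m}\,\Wr_s(f^{(1)}_s,\dots,\hat{f}^{(l)}_s,\dots,f^{(k+1)}_s)$, with $m=n-j+1$, as (up to the global sign $(-1)^k$) the Laplace expansion along the top row of the $(k+1)\times(k+1)$ determinant whose columns are indexed by $f^{(1)},\dots,f^{(k+1)}$ and whose rows are the shifts $m,s,s-1,\dots,s-k+1$. This augmented determinant vanishes whenever the extra shift $m$ coincides with one of $s,s-1,\dots,s-k+1$. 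Tracking the ranges --- $m=n-j+1$ while $s$ runs over $n-j+2,\dots,n$, so $s-m\in\{1,\dots,j-1\}\subseteq\{1,\dots,k\}$ --- shows every term dies except the single one with $j=k+1$, $s=n$, $m=n-k$; a cyclic row permutation (sign $(-1)^k$) then identifies it with $\Wr_n(f^{(1)},\dots,f^{(k+1)})$. Hence in the $B$ last-row only the $(k+1)$-st column survives, with value $f^{(0)}_n\,\Wr_n(f^{(1)},\dots,f^{(k+1)})$, and expanding $\det[\,\cdots;B\,]$ along that row gives $f^{(0)}_n\,\Wr_n(f^{(1)},\dots,f^{(k)})\,\Wr_n(f^{(1)},\dots,f^{(k+1)})$.

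To finish, I would peel the $s=n$ term off $I_{k+1}(n)=\int_{n_{k+1}}^{n-1}f^{(0)}_s\Wr_s(f^{(1)},\dots,f^{(k)})\dms+f^{(0)}_n\Wr_n(f^{(1)},\dots,f^{(k)})$, so that the two $f^{(0)}_n$-terms coming from $\det[\,\cdots;A\,]$ and $\det[\,\cdots;B\,]$ cancel, leaving precisely the right-hand side of \eqref{4.7}. The only genuinely delicate part is the sign bookkeeping (the repeated $(-1)^k$ factors from the cofactor expansion, the $(-1)^{k+1+l}$ weights, and the cyclic permutation), which is routine once the augmented-determinant picture and the coincidence-of-shifts vanishing are in place; these two observations are what make all but one of the $B_j$ collapse and drive the whole computation.
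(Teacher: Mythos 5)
Your proposal is correct and follows essentially the same route as the paper's proof: the key identity you call the ``coincidence-of-shifts vanishing'' of the augmented Casorati determinant is exactly the paper's equation \eqref{4.8}, and your $A_j-B_j$ splitting followed by row elimination and the cancellation against the $s=n$ term of $I_{k+1}(n)$ reproduces, in slightly different packaging, the paper's rewriting of $F_{n-l}$ in \eqref{4.9a}--\eqref{4.9b} and the subsequent column elimination. The sign bookkeeping you defer does work out as you indicate.
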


The above lemma was used in \cite{I1} to construct commutative algebras of differential operators for specific soliton solutions of the Toda lattice hierarchy. Since the proof is simple and we need it for one of the main results in the paper, we briefly sketch it below. 
\begin{proof}[Proof of \leref{le4.2}]
Note that
\begin{equation*}
\left|\begin{matrix}
f^{(1)}_{n}&f^{(2)}_{n}&\dots &f^{(k+1)}_{n}\\
f^{(1)}_{n-1}&f^{(2)}_{n-1}&\dots &f^{(k+1)}_{n-1}\\
\vdots & \vdots& &\vdots\\
f^{(1)}_{n-k+1}&f^{(2)}_{n-k+1}&\dots &f^{(k+1)}_{n-k+1}\\
f^{(1)}_{n-l}&f^{(2)}_{n-l}&\dots &f^{(k+1)}_{n-l}
\end{matrix}\right|=0, \text{ for every }l=0,1,\dots,k-1.
\end{equation*}
Expanding the above determinant along the last row we obtain
\begin{equation}\label{4.8}
\sum_{j=1}^{k+1}(-1)^{k+1+j}f^{(j)}_{n-l}
\Wr_n(f^{(1)}_{n},\dots\fh^{(j)}_{n},\dots,f^{(k+1)}_{n})=0
\text{ for }l=0,\dots,k-1.
\end{equation}
Using \eqref{4.7} and \eqref{4.8} we see that for $l=0,\dots,k-1$ we have
\begin{subequations}\label{4.9}
\begin{equation}\label{4.9a}
F_{n-l}=\sum_{j=1}^{k+1}(-1)^{k+1+j}f^{(j)}_{n-l}\int_{n_j}^{n}f^{(0)}_{s}
\Wr_s(f^{(1)}_{s},\dots,\fh^{(j)}_{s},
\dots,f^{(k+1)}_{s})\dms,
\end{equation}
and 
\begin{equation}\label{4.9b}
\begin{split}
F_{n-k}=&\sum_{j=1}^{k+1}(-1)^{k+1+j}f^{(j)}_{n-k}\int_{n_j}^{n}f^{(0)}_{s}
\Wr_s(f^{(1)}_{s},\dots,\fh^{(j)}_{s},
\dots,f^{(k+1)}_{s})\dms \\
&\qquad-f^{(0)}_{n}\Wr_n(f^{(1)}_{n},\dots,f^{(k+1)}_{n}).
\end{split}
\end{equation}
\end{subequations}
If we plug \eqref{4.9a} and \eqref{4.9b} in 
$\Wr_n(f^{(1)}_{n},\dots,f^{(k)}_{n},F_n)$, then most of the terms cancel by 
column elimination and we obtain \eqref{4.7}.
\end{proof}

\begin{Remark}\label{re4.3}
We list below important corollaries from the proof of \leref{le4.2}.

\item[(i)] Note that the right-hand side of \eqref{4.7} does not depend 
on the integers $n_1,n_2\dots,n_k$. Moreover, if we change $n_{k+1}$ then 
only the value of 
$$\int_{n_{k+1}}^{n-1}f^{(0)}_{s}\Wr_s(f^{(1)}_{s},\dots,f^{(k)}_{s})\dms$$
will change by an additive constant, which is independent of $n$ and 
$f^{(k+1)}_{n}$. Thus, instead of \eqref{4.6} we can write 
\begin{equation*}
F_n=\sum_{j=1}^{k+1}(-1)^{k+1+j}f^{(j)}_n\int^{n}f^{(0)}_{s}
\Wr_s(f^{(1)}_{s},\dots,\fh^{(j)}_{s},
\dots,f^{(k+1)}_{s})\dms,
\end{equation*}
leaving the lower bounds of the integrals (sums) blank and we can fix them 
at the end appropriately. This would allow us to easily change the variable, 
without keeping track of the lower end.
\item[(ii)] From \eqref{4.8} it follows that for every $l=-1, 0,1\dots,k-1$ we 
can write $F_n$ also as
\begin{equation*}
F_n=\sum_{j=1}^{k+1}(-1)^{k+1+j}f^{(j)}_n\int^{n+l}f^{(0)}_{s}
\Wr_s(f^{(1)}_{s},\dots,\fh^{(j)}_{s},
\dots,f^{(k+1)}_{s})\dms,
\end{equation*}
and changing the variable in the discrete integral we obtain
\begin{equation}\label{4.10}
F_n=\sum_{j=1}^{k+1}(-1)^{k+1+j}f^{(j)}_n\int^{n}f^{(0)}_{s+l}
\Wr_s(f^{(1)}_{s+l},\dots,\fh^{(j)}_{s+l},
\dots,f^{(k+1)}_{s+l})\dms.
\end{equation}
\item[(iii)] Let us apply (ii) with $l=\lfloor \frac{k-1}{2} \rfloor$ and consider the sum consisting of the first $k$ integrals:
\begin{equation*}
F_n^{(k)}=\sum_{j=1}^{k}(-1)^{k+1+j}f^{(j)}_n\int^{n}f^{(0)}_{s+l}
\Wr_s(f^{(1)}_{s+l},\dots,\fh^{(j)}_{s+l},\dots,f^{(k+1)}_{s+l})\dms.
\end{equation*}
Expanding each Wronskian determinant along the last column we can write 
$F_n^{(k)}$ as a sum of $k$ terms $F_n^{(k,m)}$, each one involving as 
integrand one of the functions $f^{(k+1)}_{s+m}$, where 
$m=l-k+1,l-k+2,\dots,l$.
We can use \eqref{4.8} once again, this time for the functions 
$f^{(1)}_{n},\dots,f^{(k)}_{n}$ (i.e. omitting $f^{(k+1)}_{n}$),
to change $s$ as follows:\\
\begin{itemize}
\item If $m\geq 0$ we can replace $n$ with $n-m$ in the upper limit of the 
integral, or equivalently, if we keep the upper limit of the integral to be 
$n$, we can replace $s$ by $s-m$ in the integrand. Thus $F_n^{(k,m)}$ will 
have $f^{(k+1)}_{s}$ 
as integrand (and the integration goes up to $n$).
\item If $m\leq -1$ we can replace $s$ by $s-m-1$, thus $F_n^{(k,m)}$ will have 
$f^{(k+1)}_{s-1}$ as integrand (and the integration goes up to $n$).
\end{itemize}
If $f^{(0)}_{n}, f^{(1)}_{n},\dots,f^{(k)}_{n}$ are polynomials of $n$, then summarizing the 
above observations we see that $F^{(k)}_{n}$ can be written as a sum of terms of the form
\begin{equation}\label{4.11}
r^{(0)}(n)\int^{n}[r^{(1)}(s)f^{(k+1)}_{s}+r^{(2)}(s)f^{(k+1)}_{s-1}] \dms,
\end{equation}
where $r^{(0)}(n)$, $r^{(1)}(n)$ and $r^{(2)}(n)$ are some polynomials of $n$.
\end{Remark}

\begin{Lemma}\label{le4.4}
For every $r^{(0)}(n)$, $r^{(1)}(n)$, $r^{(2)}(n) \in\Cset[n]$ there exists $\Bb \in\fD$ such that
\begin{equation}\label{4.12}
r^{(0)}(n)\,\int_{-1}^{n}[r^{(1)}(s)\La_{s}(x)+r^{(2)}(s)\La_{s-1}(x)]\dms= \Bb \La_n(x).
\end{equation}
\end{Lemma}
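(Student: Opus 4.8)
The plan is to reduce \eqref{4.12} to a single telescoping identity for the Laguerre polynomials and then to generate the polynomial coefficients using the operator $B$ of \eqref{4.1}, which lies in $\fD$ and acts on $\La_n(x)$ as multiplication by $n$. Recall also that $D_1=\dx\in\fD$.

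The first step is to dispose of the outer factor $r^{(0)}(n)$. Since $B\La_n=n\La_n$, for every polynomial $r^{(0)}$ the operator $r^{(0)}(B)\in\fD$ satisfies $r^{(0)}(B)\La_n=r^{(0)}(n)\La_n$. Because $r^{(0)}(n)$ is a scalar independent of $x$ and every element of $\fD$ is a linear differential operator, any $\Bb'\in\fD$ obeys $\Bb'[r^{(0)}(n)\La_n]=r^{(0)}(n)\,\Bb'\La_n$. Hence, once I exhibit $\Bb'\in\fD$ realizing the sum in the case $r^{(0)}\equiv1$, the operator $\Bb=\Bb'\,r^{(0)}(B)\in\fD$ produces the required left-hand side, since $\Bb\La_n=\Bb'[r^{(0)}(n)\La_n]=r^{(0)}(n)\,\Bb'\La_n$. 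It therefore suffices to treat $r^{(0)}\equiv1$.

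The heart of the argument is one telescoping identity. Starting from the differential-difference relation \eqref{2.3}, which reads $D_1(\La_s-\La_{s-1})=-\La_{s-1}$, and summing over $s=0,1,\dots,n$, the left-hand side telescopes to $D_1\La_n$ (using $\La_{-1}=0$), giving $\sum_{s=0}^{n-1}\La_s=-D_1\La_n$, and hence
\[
\int_{-1}^{n}\La_s(x)\dms=\sum_{s=0}^{n}\La_s(x)=(\Id-D_1)\La_n(x).
\]
Applying $p(B)$ for a polynomial $p$ and using that $p(B)$ commutes with the finite $x$-free sum while $p(B)\La_s=p(s)\La_s$, I get $\sum_{s=0}^{n}p(s)\La_s(x)=p(B)(\Id-D_1)\La_n(x)$. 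For the shifted term, the substitution $t=s-1$ together with $\La_{-1}=0$ turns $\sum_{s=0}^{n}p(s)\La_{s-1}$ into $\sum_{t=0}^{n-1}\tilde p(t)\La_t$ with $\tilde p(t)=p(t+1)$; subtracting the top term $\tilde p(n)\La_n=\tilde p(B)\La_n$ from the full sum then yields $\sum_{s=0}^{n}p(s)\La_{s-1}(x)=-\tilde p(B)\,D_1\La_n(x)$. Taking $p=r^{(1)}$ and $p=r^{(2)}$ and assembling, the operator
\[
\Bb=\big[r^{(1)}(B)(\Id-D_1)-\widetilde{r^{(2)}}(B)\,D_1\big]\,r^{(0)}(B),\qquad \widetilde{r^{(2)}}(t)=r^{(2)}(t+1),
\]
belongs to $\fD$ and satisfies \eqref{4.12}.

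I do not expect a serious obstacle: the entire argument rests on the telescoping consequence of \eqref{2.3} together with $B,D_1\in\fD$ and the eigenrelation $B\La_s=s\La_s$. The two points that genuinely need care are placing $r^{(0)}(B)$ on the right (since $r^{(0)}$ acts on the upper limit $n$ rather than on the summation variable), and the index bookkeeping for the $\La_{s-1}$ term, where one must pass from a sum up to $n-1$ to one up to $n$ by correcting with the single term $\tilde p(B)\La_n$.
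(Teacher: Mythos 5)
Your proof is correct and follows essentially the same route as the paper: both arguments rest on the telescoping consequence of \eqref{2.3}, the eigenrelation \eqref{4.1}, and placing $r^{(0)}(B)$ on the right, and your final operator $\bigl[r^{(1)}(B)(\Id-D_1)-r^{(2)}(B+1)D_1\bigr]r^{(0)}(B)$ is identical to the one in the paper.
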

\begin{proof}
Using \eqref{4.1} and \eqref{2.3} we see that 
\begin{align*}
&\int_{-1}^{n}[r^{(1)}(s)\La_{s}(x)+r^{(2)}(s)\La_{s-1}(x)]\dms \\
&\qquad= \left[r^{(1)}(B)\left(1-\dx\right)-r^{(2)}(B+1)\dx\right] \int_{-1}^{n}\left(\La_{s}(x)-\La_{s-1}(x)\right)\dms\\
&\qquad= \left[r^{(1)}(B)\left(1-\dx\right)-r^{(2)}(B+1)\dx\right] \La_{n}(x).
\end{align*}
Using \eqref{4.1} once again we obtain \eqref{4.12} with 
$$\Bb=\left[r^{(1)}(B)\left(1-\dx\right)-r^{(2)}(B+1)\dx\right] r^{(0)}(B)\in\fD.$$
\end{proof}
We are now ready to present the proof of the main result in this section.
\begin{proof}[Proof of \thref{th4.1}]
Let $h\in\cA$ and let $g(n)\in\Cset[n]$ be the polynomial defined in \eqref{4.3}. We apply \leref{le4.2} with 
\begin{align*}
f^{(0)}_n&=g(n+1)\\
f^{(j)}_n&=\psi^{(j-1)}_n \text{ for }j=1,2,\dots,k\\
f^{(k+1)}_n&=\La_n(x).
\end{align*}
Using \eqref{3.12}, \eqref{3.16} and \eqref{4.4} we see that with the above choice of functions $f^{(j)}_n$, the right-hand side of equation of \eqref{4.7} is equal to $(h(n)+c)\Lha_n(x)$, where $c$ is a constant independent of $n$ and $x$. Thus, the proof of \eqref{4.5} will follow from \eqref{4.7} if we can show  
that, for an appropriate choice of the integers $n_j$, there exists a differential operator $\Bh\in\fD$ such that 
\begin{equation}\label{4.13}
F_n=\Bh\, \La_n(x).
\end{equation}
Indeed, if \eqref{4.13} holds, then we can pull $\Bh$ in front of the Wronskian determinant on the left-hand side of \eqref{4.7} which combined with 
\eqref{3.16} will establish \eqref{4.5}. We use now \reref{re4.3}. Note that the last term in the sum in equation \eqref{4.10} is a polynomial of $n$ times $\La_n(x)$ hence, using \eqref{4.1}, this term can be rewritten as $\Bh''\La_n(x)$ for some operator $\Bh''\in\fD$. Thus it remains to prove that 
for the sum $F^{(k)}_n$ consisting of the first $k$ terms we have
\begin{equation*}
F^{(k)}_n=\Bh'''\La_n(x) \text{ for some }\Bh'''\in\fD.
\end{equation*}
Since $F^{(k)}_n$ is a sum of terms of the form given in \eqref{4.11}, we see that the proof follows immediately from \leref{le4.4}.
\end{proof}

\begin{Remark}\label{re4.5}
Using the explicit formulas for $\Bh$ in \leref{le4.4} one can easily write exact formulas for the operator $\Bh_h$ in \eqref{4.5}. Moreover these formulas are relatively simple for small values of $k$. For instance, when $k=1$ we have
\begin{equation*}
\Bh_h=g(B+1)\left(\dx-1\right)\tau(B)+h(B+1),
\end{equation*}
where $g$ is defined by \eqref{4.3} and $B$ is the operator given in \eqref{4.1}. 
\end{Remark}

\begin{Remark}\label{re4.6}
It is perhaps useful to note that the order of operator $\Bh_h$ constructed in \thref{th4.1} equals twice the degree of the polynomial $h$. One way to see this is as follows. From equations \eqref{2.2} one can deduce that 
$$(n+\al)\La_{n-1}(x)=\left[-x\dxx-(\al+1)\dx\right]\La_{n}(x),$$
hence for every $j\in\Nset$ we have
$$(n+\al-j+1)_{j}\,\La_{n-j}(x)=\left[-x\dxx-(\al+1)\dx\right]^j\La_{n}(x).$$
This combined with definition \eqref{3.16} of $\Lha_n(x)$ shows that there exist a polynomial $\theta(n)$ 
(which is independent of $x$) and a differential operator $\Bt$ in $x$ (which is independent of $n$), such that 
\begin{equation}\label{4.14}
\theta(n)\Lha_{n}(x)=\Bt\La_n(x).
\end{equation}
Multiplying \eqref{4.5} by $\theta(n)$ and using equations \eqref{4.1} and \eqref{4.14} we deduce that
$$(\Bh_h \Bt-\Bt h(B))\La_n(x)=0 \text{ for all }n\in\Nset_0,$$
hence $\Bh_h \Bt=\Bt h(B)$. This shows that 
$$\mathrm{order}(\Bh_h)=\mathrm{order}(h(B))=2\deg_n(h(n)).$$
\end{Remark}

\begin{Remark}\label{re4.7}
Note that equation \eqref{2.1} can be used to define $\La_n(x)$ for all $n\in\Cset\setminus(-\Nset)$. Thus, we can think of $\La_n(x)$ as a 
meromorphic function of $n$ on $\Cset$ with simple poles at $-\Nset$. For $\al\in\Nset$, we can use \eqref{3.16} to obtain an extension of 
$\Lha_n(x)$ which is meromorphic in $\Cset$. By construction, these functions will satisfy the bi-infinite second-order difference equation
\begin{equation}\label{4.15}
\cJh \Lha_{n+\varepsilon}(x)=x\Lha_{n+\varepsilon}(x),
\end{equation}
where $\cJh$ is the bi-infinite matrix constructed in \seref{se3} and we interpret $\Lha_{n+\ep}(x)$ as a bi-infinite vector.
One can show that the function $\Lha_n(x)$ will also satisfy the differential equations \eqref{4.5}. Indeed, note that $\Lha_n(x)$ is an entire function of $x$. Expanding $\Lha_n(x)$ in a power series of $x$, we see that equation \eqref{4.5} will hold if and only if the coefficients of $x^k$ on both sides are equal for every $k\in\Nset_0$. Since these coefficients are polynomials of $n$, we see that \eqref{4.5} holds for all $n\in\Cset\setminus(-\Nset)$ if and only if it holds for all $n\in\Nset_0$. Thus, by \thref{th4.1}, equation \eqref{4.5} holds for every $n\in\Cset\setminus(-\Nset)$. This combined with 
equation \eqref{4.15} yields the bispectrality of the meromorphic (in $n$) function $\Lha_n(x)$.
\end{Remark}

\section{The commutative algebras $\cAb$ and $\cDb$}\label{se5}

Equations \eqref{3.17} and \eqref{4.5} establish the bispectral properties of the generalized Laguerre polynomials $\Lha_n(x)$. More generally, we can consider the algebra $\cAb$ of all complex valued functions $h(n)$ defined on $\Nset_0$ for which there exists a differential operator 
\begin{equation}\label{5.1}
\Bh_h=\sum_{j=0}^{m_h}b^{h}_j(x)\frac{d^j}{dx^j}
\end{equation} 
such that
\begin{equation}\label{5.2}
\Bh_h\Lha_n(x)=h(n)\Lha_n(x) \text{ for all }n\in\Nset_0.
\end{equation}
We denote by $\cDb$ the corresponding algebra of differential operators 
$$\cDb=\{\Bh_h:h\in\cAb\}.$$ 
Gr\"unbaum, Haine and Horozov proved that $\cAb$ contains polynomials $h(n)$ for every sufficiently large degree and observed that $\cA=\cAb$ in several examples, see  \cite[p.~295]{GHH}. \thref{th4.1} shows that $\cA\subset\cAb$, $\cD\subset\cDb$ and we prove below that $\cA=\cAb$,  $\cD=\cDb$ for generic set of parameters $\be=(\be_0,\be_1,\dots,\be_{k-1})$. 

First we list some simple properties of $\cAb$ and $\cDb$. If we consider equation \eqref{5.2} for $n=0,1,2,\dots,m_h$ we see that $b^{h}_j(x)$ is a polynomial of $x$ of degree at most $j$. Thus, $\cDb\subset\Cset\langle \dx ,x\dx\rangle$. If we compare the coefficients of $x^{n}$ on both sides of equation \eqref{5.2} we see that $h(n)$ is a polynomial in $n$ of degree at most $m_h$. In particular, this means that we can think of $\cAb$ as a subalgebra of $\Cset[n]$.

We denote by $\cR(h_1(n),h_2(n))$ the resultant of two polynomials $h_1(n)$ and $h_2(n)$. From \eqref{2.1} we see that 
\begin{equation}\label{5.3}
\frac{d^j \La_n}{dx^j}(0)=(-1)^{j}\binom{n+\al}{\al+j},
\end{equation}
which combined with \eqref{3.16} shows that
\begin{equation}\label{5.4}
\frac{d^j \Lha_n}{dx^j}(0)=\Wr_n\left(\psi^{(0)}_n,\psi^{(1)}_n,\dots,\psi^{(k-1)}_n, (-1)^{j}\binom{n+\al}{\al+j}\right).
\end{equation}
From equations  \eqref{3.7}, \eqref{3.11}, \eqref{3.12} and \eqref{5.4} we see that $\tau(n)$ and $\frac{d^j \Lha_n}{dx^j}(0)$ are polynomials of $n$ with coefficients which are polynomials of total degree at most $k$ in the free parameters $\be_0,\be_1,\dots,\be_{k-1}$. In particular, we can evaluate  $\tau(n)$ and $\frac{d^j \Lha_n}{dx^j}(0)$ for arbitrary $n\in\Cset$.

\begin{Definition}\label{de5.1} We say that an admissible set of parameters $\be=(\be_0,\be_1,\dots,\be_{k-1})$ is {\em generic} if 
\begin{equation}\label{5.5}
\fR(\be):=\cR\left(\tau(n),\Lha_n(0)\right)\neq0.
\end{equation}
\end{Definition}

Explicit formulas for $\fR(\be)$ in the cases $k=1$ and $k=2$ are given in the next section (see equation \eqref{6.1} for $k=1$ and \eqref{6.4} for $k=2$).

\begin{Remark}\label{re5.2}
From the observations above it follows that $\fR(\be)$ is a polynomial in the parameters $\be_0,\be_1,\dots,\be_{k-1}$. It is not hard to see that this polynomial is not identically equal to $0$. Indeed, let us assume that $\fR(\be)$ is identically equal to zero. Then there must exist $n_0\in\Cset$ which is a common root of $\tau(n)$ and $\Lha_{n}(0)$ for every admissible set of parameters $\be$. From \eqref{3.7}, \eqref{3.11} and \eqref{5.4} it follows easily that $\Lha_{n}(0)$ is independent of $\be_{k-1}$, hence $n_0$ must also be independent of $\be_{k-1}$. But then $n_0$ must be a root of $\frac{\partial \tau(n)}{\partial \be_{k-1}}$ which is independent of $\be_{k-2}$. This shows that $n_0$ must be a root of $\frac{\partial \Lha_{n}(0)}{\partial \be_{k-2}}$ which is independent of $\be_{k-3}$. Continuing this way (alternating $\tau(n)$ and $\Lha_{n}(0)$) we see that $n_0$ must be independent of $\be_0,\be_1,\dots,\be_{k-1}$. But this is impossible since for every fixed $n_0\in\Cset$, $\tau(n_0)$ is a polynomial of total degree $k$ in the parameters $\be_0,\be_1,\dots,\be_{k-1}$ which is not identically equal to $0$ (for instance, the coefficient of $\be_0^{k}$ is the nonzero constant $(-1)^{\binom{k}{2}}/{\prod_{j=1}^{k-1}(\al-j)^{k-j}}$).
\end{Remark}

The main result in this section is the following theorem.

\begin{Theorem}\label{th5.3}
For generic $\be=(\be_0,\be_1,\dots,\be_{k-1})$ we have $\cA=\cAb$ and $\cD=\cDb$.
\end{Theorem}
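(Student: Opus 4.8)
The plan is to establish the reverse inclusion $\cAb\subseteq\cA$; combined with \thref{th4.1} this gives $\cA=\cAb$, and since the assignment $h\mapsto\Bh_h$ is an injective algebra homomorphism (an operator in $\Cset\langle\dx,x\dx\rangle$ is determined by its action on the polynomials $\Lha_n(x)$, $n\in\Nset_0$, which exhaust every degree), the equality $\cD=\cDb$ then follows immediately. So I fix $h\in\cAb$ with associated operator $\Bh_h$ as in \eqref{5.1}--\eqref{5.2}. We already know from the discussion after \eqref{5.2} that $h\in\Cset[n]$, and the goal is to show that $\tau(n-1)$ divides $h(n)-h(n-1)$ in $\Cset[n]$.

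First I would produce a compatibility identity between the differential equation \eqref{5.2} and the three-term recurrence \eqref{3.17}. Applying $\Bh_h$ to $\Jh\Lha_n(x)=x\Lha_n(x)$, using \eqref{5.2} on each term and writing $C:=[\Bh_h,x]$, the diagonal contribution cancels and one is left with
\begin{equation*}
\ah_n\bigl(h(n+1)-h(n)\bigr)\Lha_{n+1}(x)+\ch_n\bigl(h(n-1)-h(n)\bigr)\Lha_{n-1}(x)=C\Lha_n(x).
\end{equation*}
By \reref{re4.7} this is an identity of entire functions of $x$, valid for all $n\in\Cset$, whose Taylor coefficients at the origin are polynomials in $n$. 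Substituting the explicit entries \eqref{3.14a} and \eqref{3.14c}, the coefficient of $\Lha_{n+1}$ carries the factor $\tau(n-1)/\tau(n)$ while the coefficient of $\Lha_{n-1}$ carries $\tau(n)/\tau(n-1)$.

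Next I would evaluate this identity at $x=0$. Since $C$ is a fixed operator in $\fD$ and each $\frac{d^j\Lha_n}{dx^j}(0)$ is a polynomial in $n$ by \eqref{5.4}, the right-hand side $G(n):=(C\Lha_n)(0)$ is a polynomial in $n$. Multiplying the evaluated identity by $\tau(n)\tau(n-1)$ turns it into a polynomial identity; reducing modulo $\tau(n-1)$ annihilates the $\ah_n$-term (which acquires a factor $\tau(n-1)^2$) and the right-hand side, leaving
\begin{equation*}
\tau(n)^2\,(n+\al-k)\,\bigl(h(n-1)-h(n)\bigr)\,\Lha_{n-1}(0)\equiv0\pmod{\tau(n-1)}.
\end{equation*}
The decisive point is that the genericity hypothesis \eqref{5.5}, $\fR(\be)=\cR(\tau(n),\Lha_n(0))\neq0$, says precisely that $\tau(n)$ and $\Lha_n(0)$ have no common root; equivalently, a common root of $\tau(n-1)$ and $\Lha_{n-1}(0)$ would be a point $m+1$ with $\tau(m)=\Lha_m(0)=0$, which is excluded. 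Hence $\Lha_{n-1}(0)$ is coprime to $\tau(n-1)$ and may be cancelled, and one is left with the desired divisibility $\tau(n-1)\mid h(n)-h(n-1)$, i.e.\ $h\in\cA$.

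The main obstacle is exactly this final cancellation. Removing the factor $\Lha_{n-1}(0)$ is the one place where genericity enters and is clean, but the surviving factors $\tau(n)^2$ and $(n+\al-k)$ need care: $\tau(n)$ and $\tau(n-1)$ share a root whenever $\tau$ has two zeros differing by $1$, and the linear factor can collide with $\tau(n-1)$ when $k-\al-1$ is a zero of $\tau$, so a crude cancellation would forfeit part of the multiplicity. I would resolve this by arguing zero by zero with multiplicities directly on the \emph{un}-evaluated identity: at a zero $m$ of $\tau$ of order $\mu$, the coefficient $\ch_n$ has a pole of the corresponding order at $n=m+1$, while the left-hand side $C\Lha_n$ and the $\ah_n$-term are regular there; genericity gives $\Lha_m(0)\neq0$, so $\Lha_{n-1}(x)\not\equiv0$ at $n=m+1$ cannot absorb the pole, which therefore must be cancelled by a zero of $h(n-1)-h(n)$ of order $\mu$. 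Chaining this argument along any maximal string of consecutive zeros of $\tau$, and using the freedom to evaluate at points $x$ with $\Lha_m(x)\neq0$ to dispose of the exceptional linear factor, recovers the full divisibility and completes the inclusion $\cAb\subseteq\cA$.
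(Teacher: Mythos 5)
Your reduction to the inclusion $\cAb\subseteq\cA$ and the compatibility identity $\ah_n(h(n+1)-h(n))\Lha_{n+1}(x)+\ch_n(h(n-1)-h(n))\Lha_{n-1}(x)=[\Bh_h,x]\Lha_n(x)$ are both correct, but the route through the three-term recurrence creates an obstruction that your sketch does not actually remove. After clearing denominators and reducing modulo $\tau(n-1)$ you are left with $\tau(n)^2(n+\al-k)(h(n-1)-h(n))\Lha_{n-1}(0)\equiv 0 \pmod{\tau(n-1)}$, and the parasitic factor $\tau(n)^2$ is fatal whenever $\tau$ has two roots differing by $1$: the genericity condition \eqref{5.5} controls only $\cR(\tau(n),\Lha_n(0))$ and says nothing about $\cR(\tau(n),\tau(n-1))$, so such root configurations are not excluded. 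Your proposed repair by pole-counting on the unevaluated identity does not recover the lost multiplicity: at a point $m+1$ where $\tau$ vanishes to order $\mu$ at $m$ and to order $\nu>0$ at $m+1$, the coefficient $\ch_n=-\frac{\tau(n)}{\tau(n-1)}(n+\al-k)$ has a pole of order only $\mu-\nu$, so the cancellation argument yields $\mathrm{ord}_{m+1}(h(n)-h(n-1))\geq\mu-\nu$ rather than the required $\mu$, and chaining along a string of consecutive roots does not make up the deficit (the neighbouring equations constrain the wrong difference, $h(n+1)-h(n)$ near $n=m+1$). Likewise, moving the evaluation point away from $x=0$ cannot ``dispose of'' the factor $(n+\al-k)$, since that factor lives in $\ch_n$ and is independent of $x$. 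As written, your argument proves the theorem only under the additional hypothesis that $\tau(n)$ is coprime to $\tau(n-1)$ (and that $k-\al-1$ is not a root of $\tau$), which is a genuinely different genericity condition from \eqref{5.5} --- and the precise form of \eqref{5.5} matters, since the paper uses it to write the exact algebraic equations \eqref{6.1} and \eqref{6.4} separating the cases $\cA=\cAb$ and $\cA\subsetneqq\cAb$.

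The paper avoids this obstruction entirely by never introducing the ratio $\tau(n)/\tau(n-1)$. It evaluates \eqref{5.2} at $x=0$ for $n$ and $n-1$ and forms the antisymmetric combination \eqref{5.10}; the Desnanot--Jacobi identity (\leref{le5.4}) then shows that each quantity $\Lha_{n-1}(0)\frac{d^j \Lha_{n}}{dx^j}(0)-\Lha_{n}(0)\frac{d^j \Lha_{n-1}}{dx^j}(0)$ equals $\tau(n-1)$ times a polynomial, so $\tau(n-1)$ divides $\Lha_{n-1}(0)\Lha_n(0)(h(n)-h(n-1))$ \emph{exactly}, with no extraneous $\tau(n)$. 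The price is that one must know $\tau(n-1)$ is coprime to $\Lha_n(0)$ as well as to $\Lha_{n-1}(0)$; this is the content of \leref{le5.5}, which gives $\cR(\tau(n-1),\Lha_n(0))=\cR(\tau(n),\Lha_n(0))=\fR(\be)$. If you want to salvage your approach you would need to supply an argument that the extra factor $\tau(n)^2$ never steals multiplicity under hypothesis \eqref{5.5} alone, and I do not see one; otherwise you should switch to the Wronskian combination \eqref{5.10}.
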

It is easy to see that when $k=1$, every admissible $\be_0$ is generic and therefore $\cA=\cAb$ and $\cD=\cDb$ (see \ssref{ss6.1}). However, one can show that in the very next simplest case ($\al=k=2$) we have $\cA\subsetneqq \cAb$ and $\cD\subsetneqq \cDb$ when $\be$ is not generic, see \ssref{ss6.2}.

For the proof of the above theorem we need two auxiliary facts.

\begin{Lemma}\label{le5.4}
Let $m\in\Nset$ and let $f^{(1)}(n),f^{(2)}(n),\dots,f^{(m+2)}(n)$ be functions of $n$. Then 
\begin{equation}\label{5.6}
\begin{split}
&\Wr_n(f^{(1)}(n),f^{(2)}(n),\dots,f^{(m+2)}(n))\,\Wr_n(f^{(2)}(n-1),f^{(3)}(n-1),\dots,f^{(m+1)}(n-1))\\
&=\Wr_n(f^{(2)}(n-1),f^{(3)}(n-1),\dots,f^{(m+2)}(n-1))\,\Wr_n(f^{(1)}(n),f^{(2)}(n),\dots,f^{(m+1)}(n))\\
&-\Wr_n(f^{(2)}(n),f^{(3)}(n),\dots,f^{(m+2)}(n))\,\Wr_n(f^{(1)}(n-1),f^{(2)}(n-1),\dots,f^{(m+1)}(n-1)).
\end{split}
\end{equation}
\end{Lemma}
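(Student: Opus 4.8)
The plan is to recognize \eqref{5.6} as the classical Desnanot--Jacobi (Dodgson condensation) identity applied to a single $(m+2)\times(m+2)$ matrix, so that the lemma reduces to a well-known determinantal identity after one bookkeeping step.

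First I would introduce the matrix
\[
M=\left(f^{(i)}(n-j+1)\right)_{1\le i,j\le m+2},
\]
so that, directly from the definition of the Casorati determinant, $\Wr_n(f^{(1)}(n),\dots,f^{(m+2)}(n))=\det M$. The one observation driving everything is that evaluating a function at the shifted argument $n-1$ amounts to shifting the column index of $M$ by one: since $M_{i,j}=f^{(i)}(n-j+1)$, we have $f^{(i)}(n-j)=M_{i,j+1}$. Hence a Wronskian whose entries all carry the shift $n-1$ uses the columns $2,\dots,m+2$ of $M$ rather than $1,\dots,m+1$, while dropping the first or the last function from a Wronskian corresponds to deleting the first or the last row of $M$.

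Carrying out these identifications, I would check that the six Casorati determinants appearing in \eqref{5.6} are, respectively,
\[
\det M,\quad \det M^{1,m+2}_{1,m+2},\quad \det M^{1}_{1},\quad \det M^{m+2}_{m+2},\quad \det M^{1}_{m+2},\quad \det M^{m+2}_{1},
\]
where $M^{i}_{j}$ denotes $M$ with row $i$ and column $j$ deleted, and $M^{1,m+2}_{1,m+2}$ denotes $M$ with rows $1,m+2$ and columns $1,m+2$ deleted. With these identifications \eqref{5.6} becomes
\[
\det M\cdot\det M^{1,m+2}_{1,m+2}=\det M^{1}_{1}\cdot\det M^{m+2}_{m+2}-\det M^{1}_{m+2}\cdot\det M^{m+2}_{1},
\]
which is exactly the Desnanot--Jacobi identity for the $(m+2)\times(m+2)$ matrix $M$, and the proof is complete.

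The only real work---and the only place where one can go wrong---is the matching in the third step: one must pair each of the five shifted or truncated Wronskians with the correct deleted rows and columns and verify that the two signs on the right-hand side come out as in the statement. This is routine once the column-shift principle above is in hand, and there is no genuine obstacle. If a self-contained argument is preferred over citing Desnanot--Jacobi, the same identity can be obtained from Sylvester's determinant identity (equivalently, from the Grassmann--Pl\"ucker relations), but reducing to Desnanot--Jacobi is the most economical route.
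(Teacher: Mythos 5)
Your proposal is correct and is exactly the paper's argument: the paper's proof consists of the single sentence that \eqref{5.6} follows by applying the Desnanot--Jacobi identity to $\Wr_n(f^{(1)}(n),\dots,f^{(m+2)}(n))$. Your row/column identifications of the six Casorati determinants with $\det M$, $\det M^{1,m+2}_{1,m+2}$, $\det M^{1}_{1}$, $\det M^{m+2}_{m+2}$, $\det M^{1}_{m+2}$, $\det M^{m+2}_{1}$ all check out, so you have simply made explicit the bookkeeping the paper leaves to the reader.
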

\begin{proof}
The proof of \eqref{5.6} follows immediately by applying the Desnanot-Jacobi identity to the determinant $\Wr_n(f^{(1)}(n),f^{(2)}(n),\dots,f^{(m+2)}(n))$.
\end{proof}

\begin{Lemma}\label{le5.5}
Let $m\in\Nset$ and let $f^{(1)}(n),f^{(2)}(n),\dots,f^{(m+1)}(n)$ be polynomials of $n$. For $j=m$ and $j=m+1$, set $G^{(j)}(n)=\Wr_n(f^{(1)}(n),f^{(2)}(n),\dots,f^{(j)}(n))$. Then 
\begin{equation}\label{5.7}
\cR\left(G^{(m)}(n),G^{(m+1)}(n)\right)=\cR\left(G^{(m)}(n-1),G^{(m+1)}(n)\right).
\end{equation}
\end{Lemma}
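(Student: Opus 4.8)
The plan is to argue by induction on $m$, using \leref{le5.4} to produce a three-term contiguity relation among Casorati determinants and then reading off the resultant identity from the behaviour at the zeros of the smaller Wronskian. Throughout write $G^{(m)}(n)=\Wr_n(f^{(1)},\dots,f^{(m)})$ and $G^{(m+1)}(n)=\Wr_n(f^{(1)},\dots,f^{(m+1)})$, and recall the standard formula $\cR(p,q)=\mathrm{lc}(p)^{\deg q}\prod_{p(\gamma)=0}q(\gamma)$.

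First I would record an elementary shift symmetry of resultants that does all the sign bookkeeping: for a polynomial $g$ of degree $D$ with zeros $\gamma_1,\dots,\gamma_D$ (counted with multiplicity), pairing the factors $(\gamma_r-\gamma_s-1)(\gamma_s-\gamma_r-1)=1-(\gamma_r-\gamma_s)^2=(\gamma_r-\gamma_s+1)(\gamma_s-\gamma_r+1)$ for $r\neq s$ shows that
$$\prod_{r=1}^{D} g(\gamma_r-1)=(-1)^{D}\prod_{r=1}^{D} g(\gamma_r+1),$$
the sign coming only from the $D$ diagonal terms $r=s$. Equivalently $\cR(g(n),g(n-1))=(-1)^{D}\cR(g(n),g(n+1))$.

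For the inductive step, set $R(n)=\Wr_n(f^{(2)},\dots,f^{(m+1)})$, $H(n)=\Wr_n(f^{(2)},\dots,f^{(m)})$ and $P(n)=\Wr_{n-1}(f^{(2)},\dots,f^{(m)})=H(n-1)$. Applying \leref{le5.4} to $f^{(1)},\dots,f^{(m+1)}$ (i.e. with $m$ replaced by $m-1$) gives the polynomial identity
$$G^{(m+1)}(n)P(n)=R(n-1)G^{(m)}(n)-R(n)G^{(m)}(n-1).$$
I would express both resultants in \eqref{5.7} as products over the zeros $\gamma$ of $G^{(m)}$: since the zeros of $G^{(m)}(n-1)$ are the $\gamma+1$, \eqref{5.7} is equivalent to $\prod_\gamma G^{(m+1)}(\gamma)=\prod_\gamma G^{(m+1)}(\gamma+1)$. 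Evaluating the contiguity relation, and its shift $n\mapsto n+1$, at a zero $\gamma$ of $G^{(m)}$ annihilates one term in each and yields $G^{(m+1)}(\gamma)P(\gamma)=-R(\gamma)G^{(m)}(\gamma-1)$ and $G^{(m+1)}(\gamma+1)P(\gamma+1)=R(\gamma)G^{(m)}(\gamma+1)$. Multiplying each over all $\gamma$ and dividing the two relations, the factors $\prod_\gamma R(\gamma)$ cancel, and the shift symmetry applied to $g=G^{(m)}$ replaces $\prod_\gamma G^{(m)}(\gamma-1)/\prod_\gamma G^{(m)}(\gamma+1)$ by $(-1)^{\deg G^{(m)}}$, which exactly offsets the sign $(-1)^{\deg G^{(m)}}$ from the vanishing terms. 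What remains is
$$\frac{\prod_\gamma G^{(m+1)}(\gamma)}{\prod_\gamma G^{(m+1)}(\gamma+1)}=\frac{\prod_\gamma P(\gamma+1)}{\prod_\gamma P(\gamma)}=\frac{\prod_\gamma H(\gamma)}{\prod_\gamma H(\gamma-1)},$$
so \eqref{5.7} reduces to $\cR(G^{(m)}(n),H(n))=\cR(G^{(m)}(n),H(n-1))$.

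Finally I would recognize this reduced identity as the lemma itself with $m$ replaced by $m-1$: applied to the cyclically reordered tuple $(f^{(2)},\dots,f^{(m)},f^{(1)})$, whose length-$(m-1)$ and length-$m$ consecutive Wronskians are $H$ and $(-1)^{m-1}G^{(m)}$, the statement reads $\cR(H(n),G^{(m)}(n))=\cR(H(n-1),G^{(m)}(n))$ after the constant cancels, which symmetrizes to exactly $\cR(G^{(m)}(n),H(n))=\cR(G^{(m)}(n),H(n-1))$. The base case $m=1$, where $P$ is the empty Wronskian $\equiv 1$, is immediate from the shift symmetry alone. The main obstacle is bookkeeping rather than structure: the computation above divides by $\prod_\gamma R(\gamma)$ and $\prod_\gamma P(\gamma)$ and shuffles resultants whose leading-coefficient powers must in principle be tracked, because $\deg\!\big(G^{(m+1)}P\big)$ need not equal $\deg\!\big(RG^{(m)}\big)$. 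Since \eqref{5.7} is a polynomial identity in the coefficients of the $f^{(i)}$, I would perform the division-based argument on the Zariski-dense locus where $G^{(m)}$ has simple zeros and all the relevant products are nonzero, and then invoke density to drop the genericity hypothesis.
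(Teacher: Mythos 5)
Your proposal is correct and follows essentially the same route as the paper: induction on $m$, with the Desnanot--Jacobi three-term relation from \leref{le5.4} as the key input and the induction hypothesis applied to the cyclically reordered tuple $(f^{(2)},\dots,f^{(m)},f^{(1)})$. The only differences are presentational --- you unpack the resultant identities as products over the roots of $G^{(m)}$ and track the signs and degenerate cases (via the shift symmetry and the Zariski-density argument) explicitly, where the paper performs the same cancellations at the level of multiplicativity of $\cR$ without comment.
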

\begin{proof}
We prove the statement by induction on $m$. When $m=1$ it is easy to see that 
\begin{equation*}
\begin{split}
&\cR\left(G^{(1)}(n),G^{(2)}(n)\right)=\cR\left(G^{(1)}(n-1),G^{(2)}(n)\right)\\
&\qquad=\cR\left(f^{(1)}(n-1),f^{(1)}(n)\right)\,\cR\left(f^{(1)}(n),f^{(2)}(n)\right).
\end{split}
\end{equation*}
Suppose now that the statement is true for some $m\in\Nset$ and we want to prove it for $m+1$. For $j=m+1$ and $j=m+2$ let us denote 
$$Q^{(j)}(n)=\Wr_n(f^{(2)}(n),f^{(3)}(n),\dots,f^{(j)}(n)).$$
By the induction hypothesis we have
\begin{equation}\label{5.8}
\cR\left(G^{(m+1)}(n),Q^{(m+1)}(n)\right)=\cR\left(G^{(m+1)}(n),Q^{(m+1)}(n-1)\right).
\end{equation}
From \leref{le5.4} it follows that 
\begin{subequations}\label{5.9}
\begin{align}
&\cR\left(G^{(m+1)}(n),G^{(m+2)}(n)\right)\,\cR\left(G^{(m+1)}(n),Q^{(m+1)}(n-1)\right)\nonumber\\
&\quad=\cR\left(G^{(m+1)}(n),-Q^{(m+2)}(n)G^{(m+1)}(n-1)\right)\nonumber\\
&\quad=\cR\left(G^{(m+1)}(n),Q^{(m+2)}(n)\right)\,\cR\left(G^{(m+1)}(n-1),G^{(m+1)}(n)\right),\label{5.9a}
\end{align}
and 
\begin{align}
&\cR\left(G^{(m+1)}(n-1),G^{(m+2)}(n)\right)\,\cR\left(G^{(m+1)}(n-1),Q^{(m+1)}(n-1)\right)\nonumber\\
&\quad=\cR\left(G^{(m+1)}(n-1),Q^{(m+2)}(n-1)G^{(m+1)}(n)\right)\nonumber\\
&\quad=\cR\left(G^{(m+1)}(n),Q^{(m+2)}(n)\right)\,\cR\left(G^{(m+1)}(n-1),G^{(m+1)}(n)\right).\label{5.9b}
\end{align}
\end{subequations}
The proof now follows from equations \eqref{5.8} and \eqref{5.9}.
\end{proof}

\begin{proof}[Proof of \thref{th5.3}]
We need to show that for generic $\be$, $\cAb\subset\cA$. Let $h$ be an arbitrary element of $\cAb$. By definition, there exists a differential operator $\Bh_h\in\cDb$ such that \eqref{5.2} holds. We know that $\Bh_h$ has polynomial (in $x$) coefficients and $h(n)\in\Cset[n]$. Evaluating \eqref{5.2} at $x=0$ and using the resulting equation for $n$ and $n-1$ we find
\begin{equation}\label{5.10}
\begin{split}
&\Lha_{n-1}(0)(\Bh_{h}\Lha_{n}(x))\Big\vert_{x=0}-\Lha_{n}(0)(\Bh_{h}\Lha_{n-1}(x))\Big\vert_{x=0}\\
&\qquad=\Lha_{n-1}(0)\Lha_{n}(0)(h(n)-h(n-1)).
\end{split}
\end{equation}
We look at both sides of \eqref{5.10} as polynomials of $n$ and we would like to prove next that $\tau(n-1)$ divides the left-hand side. For fixed $j\in\Nset$ we apply \leref{le5.4} with $m=k$ and $f^{(1)}(n)=\frac{d^j \La_n}{dx^j}(0)$, $f^{(2)}(n)=\psi^{(0)}_n$, $f^{(3)}(n)=\psi^{(1)}_n$, \dots, $f^{(k+1)}(n)=\psi^{(k-1)}_n$, $f^{(k+2)}(n)=\La_n(0)$ and we obtain:
\begin{equation*}
\begin{split}
&\Wr_n\left(\psi^{(0)}_n,\psi^{(1)}_n,\dots,\psi^{(k-1)}_n,\frac{d^j \La_n}{dx^j}(0),\La_n(0)\right)\tau(n-1)\\
&\qquad 
=\Lha_{n-1}(0)\frac{d^j \Lha_{n}}{dx^j}(0)-\Lha_{n}(0)\frac{d^j \Lha_{n-1}}{dx^j}(0).
\end{split}
\end{equation*}
The last equation shows that $\tau(n-1)$ divides $\Lha_{n-1}(0)\frac{d^j \Lha_{n}}{dx^j}(0)-\Lha_{n}(0)\frac{d^j \Lha_{n-1}}{dx^j}(0)$ for every $j\in\Nset$ which proves that $\tau(n-1)$ divides the left-hand side of \eqref{5.10}. Therefore, $\tau(n-1)$ must divide 
 the right-hand side of \eqref{5.10}. It remains to show now that for generic $\be$ the polynomials $\tau(n-1)$ and $\Lha_{n-1}(0)\Lha_{n}(0)$ are relatively prime. From \leref{le5.5} it follows that 
 $$\cR\left(\tau(n),\Lha_{n}(0)\right)=\cR\left(\tau(n-1),\Lha_{n}(0)\right).$$
Thus using the notations in \deref{de5.1} we find
$$\cR\left(\tau(n-1),\Lha_{n-1}(0)\Lha_{n}(0)\right)=\left[\fR(\be)\right]^2\neq0 \text{ for generic }\be,$$
completing the proof.
\end{proof}

\section{Explicit examples}\label{se6}

\subsection{One Darboux step}\label{ss6.1}
Suppose that $\al\in\Nset$ and $k=1$. Then 
$$\tau(n)=\psi^{(0)}_n=\beta_0+\binom{n+\al}{\al}$$
and formula \eqref{3.16} gives
$$\Lh^{\al,(\be_0)}_n(x)=\Wr_n\left(\psi^{(0)}_n,\La_n(x)\right).$$
The polynomials $\Lh^{\al,(\be_0)}_n(x)$ are orthogonal on $[0,\infty)$ with respect to the weight distribution 
$$w(x)=\frac{1}{(\al-1)!}x^{\al-1}e^{-x}+u_0\de(x), \text{ where }u_0=\frac{1}{\be_0}.$$
From the explicit formula above it is easy to see that 
$$\Lh^{\al,(\be_0)}_n(0)=-\be_0\binom{n+\al-1}{\al-1}.$$
Combining the last formula with the formula for $\tau(n)$ and definition \deref{de5.1}  we see that 
\begin{equation}\label{6.1}
\fR(\be_0)=\cR\left(\tau(n),\Lh^{\al,(\be_0)}_n(0)\right)=(-1)^{\al}\frac{\be_0^{2\al-1}}{[(\al-1)!]^{\al}}.
\end{equation}
Recall that $\be_0\neq0$ since we work with an admissible set of parameters (i.e. $\tau(n-1)\neq 0$ for $n\in\Nset_0$) and therefore every admissible $\be_0$ is generic. This means that when $k=1$ we always have $\cA=\cAb$ and $\cD=\cDb$. We write below an explicit set of generators for both algebras.

For $j\in\Nset_0$ we consider the polynomials of $n$ defined by
\begin{equation}\label{6.2}
h^{(j)}(n)=\be_0\binom{n+\al+j}{j+1}+\binom{\al+j}{\al}\binom{n+\al+j}{\al+j+1}.
\end{equation}
Note that 
$$g^{(j)}(n):=\frac{h^{(j)}(n)-h^{(j)}(n-1)}{\tau(n-1)}=\binom{n+\al+j-1}{j}\in\Cset[n].$$
This shows that $h^{(j)}(n)\in\cA$ for all $j\in\Nset_0$. Moreover, since $\deg_n(h^{(j)}(n))=\al+j+1$ we see that 
$$\cA=\Cset[h^{(0)}(n),h^{(1)}(n),\dots,h^{(\al)}(n)].$$
Using \reref{re4.5} we can write explicit formulas for the operators $\Bh_j:=\Bh_{h^{(j)}}$:
\begin{equation*}
\Bh_j=g^{(j)}(B+1)\left(\dx-1\right)\tau(B)+h^{(j)}(B+1),
\end{equation*}
where $B$ is the second-order differential operator for Laguerre polynomials given in \eqref{4.1}. 
The algebra of differential operators $\cD$ is therefore generated by the operators $\Bh_0, \Bh_1,\dots,\Bh_{\al}$ of orders $2\al+2,2\al+4,\dots,4\al+2$. 
The construction of the operator $\Bh_0$ of minimal order $2\al+2$ has a long history. For $\al=1$, $\al=2$ and $\al=3$ the differential equation of minimal order 
goes back to the works of H.~L.~Krall, A.~M.~Krall and L.~L.~Littlejohn \cite{Kr,KL,Kr1,Kr2,L}. The explicit form of $\Bh_0$ for arbitrary $\al\in\Nset$ was obtained by J.~Koekoek and R.~Koekoek \cite{KK}.

\subsection{Two Darboux steps}\label{ss6.2}
Let us take now $k=2$ and let $\al$ be an integer greater than $1$. The polynomials $\Lha_n(x)=\Lh^{\al,(\be_0,\be_1)}_n(x)$ are defined by \eqref{3.16} with 
\begin{subequations}\label{6.3}
\begin{align}
\psi^{(0)}_n&=\beta_0\phi^{1,0}_{n}+\phi^{2,0}_{n}=\beta_0+\binom{n+\al}{\al}\label{6.3a}\\
\psi^{(1)}_n&=\beta_1\phi^{1,0}_{n}+\beta_0\phi^{1,1}_{n}+\phi^{2,1}_{n}=\beta_1+\frac{\be_0}{\al-1}(n+1)-\binom{n+\al+1}{\al+1}.\label{6.3b}
\end{align}
\end{subequations}
They are orthogonal on $[0,\infty)$ with respect to the weight distribution 
$$w(x)=\frac{1}{(\al-2)!}x^{\al-2}e^{-x}+u_0\de(x)-u_1\de'(x),$$ 
where 
$$u_0=-\frac{(\al-1)(\be_0+\be_1)}{\be_0^2} \text{ and }u_1=\frac{\al-1}{\be_0}.$$
For $\Lh^{\al,(\be_0,\be_1)}_n(0)$ we obtain the following formula:
\begin{equation*}
\Lh^{\al,(\be_0,\be_1)}_n(0)=-\frac{\be_0}{(\al-1)!}(n+1)_{\al-2}\left[\binom{n+\al-1}{\al}+\be_0\right].
\end{equation*}
Using the explicit formulas for $\tau(n)$ and $\Lh^{\al,(\be_0,\be_1)}_n(0)$ one can derive the following formula for the resultant $\fR(\be)$:
\begin{equation}\label{6.4}
\fR(\be_0,\be_1)=\frac{2^{\al}\be_0^{5\al-5}}{(\al-1)^{2\al-2}(\al+1)^{\al}[(\al-1)!]^{4\al}}
\left[ \be_0^{\al}\binom{\frac{\al-1}{2}-\frac{\be_1(\al^2-1)}{2\be_0\al}}{\al}+\be_0^{\al+1}\right].
\end{equation}
Ignoring the unessential constant factors and $\be_0^{5\al-5}$ (which is nonzero), we see that admissible $\be_0$ and $\be_1$ are generic if and only if they do not satisfy the algebraic equation
\begin{equation}\label{6.5}
(2\al)^{\al}\al!\be_0^{\al+1}+\prod_{j=0}^{\al-1}\left[(\al^2-\al-2\al j)\be_0-(\al^2-1)\be_1\right]=0.
\end{equation}
Using the explicit connection between $(u_0,u_1)$ and $(\be_0,\be_1)$, one can easily rewrite the above equation in terms of $u_0$ and $u_1$.

Note that $\tau(n)$ is a polynomial in $n$ of degree $2\al$. Thus, 
the commutative algebra $\cA$ is generated by polynomials of degrees $2\al+1,2\al+2,\dots,4\al+1$. The corresponding algebra of differential operators $\cD$ is therefore generated by operators of orders $4\al+2,4\al+4,\dots,8\al+2$. 

Let us consider in details the simplest case $\al=2$. The weight distribution  is
$$w(x)=e^{-x}+u_0\de(x)-u_1\de'(x).$$ 
Equation \eqref{6.5} becomes 
\begin{equation}\label{6.6}
32\be_0^3-4\be_0^2+9\be_1^2=0,
\end{equation}
or equivalently, in terms of $u_0=-(\be_0+\be_1)/\be_0^2$ and $u_1=1/\be_0$ we get
$$9u_0^2+18u_0u_1+5u_1^2+32u_1=0.$$
If $\be_0$ and $\be_1$ do not satisfy \eqref{6.6}, then $\cAb=\cA$ is an algebra generated by polynomials of degrees $5,6,\dots,9$ and $\cDb=\cD$ is an algebra of differential operators generated by  operators of orders $10,11,\dots,18$. The operator of minimal order 10 is given explicitly in \cite[pages 292--295]{GHH} (in slightly different notations). 

However, if $\be_0$ and $\be_1$ satisfy \eqref{6.6}, then one can show that 
$\cAb$ contains the following polynomial of degree $4$ 
$$h(n)=n^4+2n^3+(28\be_0-1)n^2+(36\be_1+28\be_0-2)n.$$
Thus if \eqref{6.6} holds we have 
$\cA\subsetneqq \cAb$, $\cD\subsetneqq \cDb$ and the polynomials $\Lh^{\al,(\be_0,\be_1)}_n(x)$ satisfy a differential equation of order 8.

\subsection{Sobolev-type orthogonality}\label{ss6.3}
Let $\al$ be an integer greater than $1$ and let us consider a two-dimensional subspace of the space $\Span\{\phi^{1,0}_n,\phi^{2,0}_n,\phi^{1,1}_n,\phi^{2,1}_n\}$ with basis $\{\psi^{(0)}_n,\psi^{(1)}_n\}$. If we assume that the coefficient of $\phi^{2,0}_n$ in $\psi^{(0)}_n$ and the coefficient of $\phi^{2,1}_n$ in $\psi^{(1)}_n$  are nonzero, we can pick $\psi^{(0)}_n$ and $\psi^{(1)}_n$ as follows:
\begin{subequations}\label{6.7}
\begin{align}
\psi^{(0)}_n&=\beta_0\phi^{1,0}_{n}+l_0\phi^{1,1}_{n}+\phi^{2,0}_{n}=\beta_0+\frac{l_0}{\al-1}(n+1)+\binom{n+\al}{\al}\label{6.7a}\\
\psi^{(1)}_n&=\beta_1\phi^{1,0}_{n}+l_1\phi^{1,1}_{n}+\phi^{2,1}_{n}=\beta_1+\frac{l_1}{\al-1}(n+1)-\binom{n+\al+1}{\al+1}.\label{6.7b}
\end{align}
\end{subequations}
If we take $l_0=0$ and $l_1=\be_0$ equations \eqref{6.7} reduce to \eqref{6.3}. If we define a bi-infinite matrix $\cQ$ with basis $\{\psi^{(0)}_{n+\varepsilon},\psi^{(1)}_{n+\varepsilon}\}$ then using the notations in \seref{se3} we have
\begin{equation*}
\ker(\cQ)\subset\ker[\cJ^{(\al)}]^2,
\end{equation*}
but for arbitrary $l_0$ and $l_1$
$$\cJ^{(\al)}(\ker(\cQ))\not\subset\ker(\cQ).$$
Since only the first part of equation \eqref{3.4} holds, we cannot expect to find a tridiagonal matrix $\cJh$, which is obtained by a sequence of two Darboux transformations from $\cJa$, but we can find a pentadiagonal matrix $\cJh$ which is a Darboux transformation of $[\cJ^{(\al)}]^2$ as follows:
$$[\cJ^{(\al)}]^2=\cP\cQ\curvearrowright \cJh=\cQ\cP.$$
In the semi-infinite limit $(\varepsilon\rightarrow 0)$, this construction leads to a semi-infinite upper-triangular matrix $P$ and a semi-infinite lower-triangular matrix $Q$, with two diagonals above (resp. below) the main diagonal:
\begin{equation*}
P=\left[\begin{matrix}
p_{0,0}&p_{0,1}&p_{0,2} & \\
&p_{1,1}&p_{1,2}&p_{1,3} & \\
&&p_{2,2}&p_{2,3}&p_{2,4} & \\
 &  &   &\ddots&\ddots&\ddots
\end{matrix}\right], \qquad 
Q=\left[\begin{matrix}
q_{0,0}\\
q_{1,0} &q_{1,1}\\
q_{2,0} & q_{2,1} & q_{2,2} \\
& \ddots &\ddots &\ddots 
\end{matrix}\right],
\end{equation*}
satisfying 
\begin{equation}\label{6.8}
[\Ja]^2=PQ \text{ and }\Jh=QP,
\end{equation}
where $\Jh$ is a semi-infinite pentadiagonal matrix. 

Let us consider the polynomials $\Lh^{\al,(\be_0,\be_1,l_0,l_1)}_n(x)=\Lh_n(x)$ obtained by applying $Q$ to $\La_n(x)$, or equivalently by using the Wronskian formula \eqref{3.16} with $k=2$ and $\psi^{(0)}_n,\psi^{(1)}_n$ given in \eqref{6.7}, i.e. 
\begin{equation}\label{6.9}
\Lh_n(x)=Q\La_n(x)=\Wr_n(\psi^{(0)}_n,\psi^{(1)}_n,\La_n(x)).
\end{equation}
As we noted in \reref{re3.1},  \thref{th4.1}  and \thref{th5.3} can be applied for $\Lh_n(x)$ (since the proof used only the fact that $\psi^{(j)}_n$ are polynomials of $n$). Thus, if we define $\tau(n)$ and $\cA$ by equations \eqref{3.12} and \eqref{4.2}, then for every $h\in\cA$ there exists $\Bh_h\in\fD$, such that
\begin{equation*}
\Bh_h\Lh_n(x)=h(n)\Lh_n(x).
\end{equation*}
We can also consider the algebras $\cAb$, $\cDb$ defined at the beginning of \seref{se5} and for generic parameters $\be_0,\be_1,l_0,l_1$ we will have $\cA=\cAb$ and $\cD=\cDb$.

The polynomials $\Lh_n(x)$ will not (in general) be orthogonal with respect to a moment functional. However, if we pick ``carefully" the free parameters 
$\be_0,\be_1,l_0,l_1$ the polynomials $\Lh_n(x)$ will be orthogonal with respect to the Sobolev inner product
\begin{subequations}\label{6.10}
\begin{equation}\label{6.10a}
\langle F(x), G(x)\rangle=\frac{1}{(\al-2)!}\int_0^{\infty}F(x)G(x)x^{\al-2}e^{-x}dx+[F(0),F'(0)]\,A\,\left[\begin{matrix} G(0)\\ G'(0)\end{matrix}\right],
\end{equation}
where $A$ is a symmetric $2\times 2$ matrix
\begin{equation}\label{6.10b}
A=\left[\begin{matrix} u_0 & u_1 \\ u_1 & v_0\end{matrix}\right].
\end{equation}
\end{subequations}
Indeed, note that equation \eqref{6.9} gives 
\begin{equation*}
\Lh_n(x)=q_{n,n}\La_{n}(x)+q_{n,n-1}\La_{n-1}(x)+q_{n,n-2}\La_{n-2}(x),
\end{equation*}
which combined with the orthogonality relations for $\La_n(x)$ shows that for the inner product defined in \eqref{6.10} we have
\begin{equation*}
\langle \Lh_n(x), x^j\rangle =0\text{ when } 2\leq j <n.
\end{equation*}
Thus, we need to pick the parameters so that $\langle \Lh_n(x), x^j\rangle =0$ when $j\in\{0,1\}$ and $n>j$. From equations \eqref{6.8}, \eqref{6.9} and \eqref{2.4} we see that $P\Lh_n(x)=x^2\La_n(x)$, or equivalently
\begin{equation*}
p_{n,n}\Lh_{n}(x)+p_{n,n+1}\Lh_{n+1}(x)+p_{n,n+2}\Lh_{n+2}(x)=x^2\La_n(x).
\end{equation*}
It is easy to see that the numbers $p_{n,n+2}$ are nonzero, and therefore the last equation shows by induction that
\begin{itemize}
\item if $\langle \Lh_n(x), 1\rangle =0$ for $n=1$ and $n=2$, then $\langle \Lh_n(x), 1\rangle =0$ for all $n\in\Nset$.
\item if $\langle \Lh_n(x), x\rangle =0$ for $n=2$ and $n=3$, then $\langle \Lh_n(x), x\rangle =0$ for all $n\geq 2$.
\end{itemize}
Summarizing the above observations we see that the polynomials $\Lh_n(x)$ will be mutually orthogonal with respect to the inner product \eqref{6.10} if and only if $\langle \Lh_n(x), 1\rangle =0$ for $n\in\{1,2\}$ and  $\langle \Lh_n(x), x\rangle =0$ for $n\in\{2,3\}$. If $\det(A)\neq0$, one can show that these four equations are satisfied if we pick $\be_0,\be_1,l_0,l_1$ as follows
\begin{subequations}\label{6.11}
\begin{align}
&\be_0=-\frac{(\al-1)u_1}{\det(A)},&& \be_1=\frac{(\al-1)(u_0+u_1)}{\det(A)},\label{6.11a}\\
&l_0=\frac{(\al-1)v_0}{\det(A)},&& l_1=-\frac{(\al-1)(u_1+v_0)}{\det(A)}.\label{6.11b}
\end{align}
\end{subequations}
If $v_0=0$, then $l_0=0$, $l_1=\be_0$ and we obtain the polynomials in the previous example (iteration of two Darboux transformations). For $v_0\neq 0$, the polynomials are no longer associated with a moment functional (since $\langle x,x \rangle \neq \langle x^2,1 \rangle$).

Since $\deg_n(\tau(n))=2\al$ the commutative algebra $\cA$ is generated by polynomials of degrees $2\al+1,2\al+2,\dots,4\al+1$. In the case when the matrix $A$ is diagonal (i.e. $u_1=0$) the operator of minimal order $4\al+2$ was computed explicitly in \cite{KKB}.

Finally, if the matrix $A$ is singular, we can still apply the above construction by a limiting procedure, but we need to pick a different basis for the two-dimensional subspace of $\Span\{\phi^{1,0}_n,\phi^{2,0}_n,\phi^{1,1}_n,\phi^{2,1}_n\}$. For instance, if we want to consider the inner product \eqref{6.10a} with 
\begin{equation*}
A=\left[\begin{matrix} 0 & 0 \\ 0 & v_0\end{matrix}\right],
\end{equation*}
then the polynomials $\Lh_n(x)$ will be given by \eqref{6.9} with $\psi^{(0)}_n$ and $\psi^{(1)}_n$ defined by
\begin{subequations}\label{6.12}
\begin{align}
\psi^{(0)}_n&=n+1\label{6.12a}\\
\psi^{(1)}_n&=\binom{n+\al+1}{\al+1}-\binom{n+\al}{\al}-\frac{\al-1}{v_0}.\label{6.12b}
\end{align}
\end{subequations}
Note that in this case $\deg_n(\tau(n))=\al+1$, which means that our theory will produce a differential operator in $\cD$ of minimal order $2\al+4$, in agreement with the results in \cite{KKB}.


\begin{thebibliography}{xx}

\bibitem{AvM} M.~Adler and P.~van~Moerbeke, {\em Darboux transforms on band matrices, weights, and associated polynomials},  Internat. Math. Res. Notices  2001,  no.~18, 935--984.

\bibitem{BHY} B.~Bakalov, E.~Horozov and M.~Yakimov, {\em General methods for
constructing bispectral operators}, Phys. Lett. A 222 (1996), no.~1-2, 159--66 (q-alg/9605011).

\bibitem{Bo} S.~Bochner, {\em \"Uber Sturm-Liouvillesche Polynomsysteme},
Math. Z. 29 (1929), no. 1, 730--736. 

\bibitem{DM} H.~Due\~nas and F.~Marcell\'an, {\em Asymptotic behaviour of Laguerre-Sobolev-type orthogonal polynomials. A nondiagonal case}, 
J. Comput. Appl. Math 235 (2010), no.~4, 998--1007.

\bibitem{DG} J.~J.~Duistermaat and F.~A.~Gr\"unbaum, {\em Differential 
equations in the spectral parameter}, Comm. Math. Phys. 103 (1986), 
no. 2, 177--240.

\bibitem{DurG} A.~J.~Dur\'an and F.~A.~Gr\"unbaum, {\em Matrix differential equations and scalar polynomials satisfying higher order recursions}, J. Math. Anal. Appl.  354  (2009),  no. 1, 1--11.

\bibitem{GH} F.~A.~Gr\"unbaum and L.~Haine, Orthogonal polynomials 
satisfying differential equations: the role of the Darboux transformation, In: {\em Symmetries and integrability of difference equations}, 
pp.~143--154, CRM Proc. Lecture Notes, Amer. Math. Soc., Providence, RI, 1996.

\bibitem{GHH} F.~A.~Gr\"unbaum, L.~Haine and E.~Horozov, {\em Some functions 
that generalize the Krall-Laguerre polynomials}, J. Comput. Appl. Math. 106 
(1999), no. 2, 271--297.

\bibitem{GY} F.~A.~Gr\"unbaum and M.~Yakimov, {\em Discrete bispectral Darboux transformations from Jacobi operators},
Pacific J. Math. 204 (2002), 395--431 (arXiv:math/0012191).

\bibitem{I1} P.~Iliev, {\em Discrete versions of the Kadomtsev-Petviashvili hierarchy and the bispectral problem}, Ph.D. Thesis, Universit\'e Catholique de Louvain, 1999.

\bibitem{I2} P. Iliev, {\em Krall-Jacobi commutative algebras of partial differential operators}, to appear in J. Math. Pures Appl. (arXiv:1011.2599).

\bibitem{KK} J.~Koekoek and R.~Koekoek, {\em On a differential equation for Koornwinder's generalized Laguerre polynomials},  Proc. Amer. Math. Soc.  112  (1991),  no. 4, 1045--1054. 
\bibitem{KKB} J.~Koekoek, R.~Koekoek and H.~Bavinck, {\em On differential equations for Sobolev-type Laguerre polynomials}, Trans. Amer. Math. Soc.  350  (1998),  no. 1, 347--393. 

\bibitem{Ko} T.~H.~Koornwinder, {\em Orthogonal polynomials with weight function $(1-x)^{\alpha }(1+x)^{\beta }+M\delta (x+1)+N\delta (x-1)$}, Canad. Math. Bull. 27 (1984), no. 2, 205--214. 

\bibitem{Kr} A.~M.~Krall, {\em Orthogonal polynomials satisfying fourth order differential equations},  Proc. Roy. Soc. Edinburgh Sect. A  87  (1980/81), no. 3-4, 271--288.

\bibitem{KL} A.~M.~Krall and L.~L.~Littlejohn, {\em On the classification of differential equations having orthogonal polynomial solutions II},  Ann. Mat. Pura Appl. (4)  149  (1987), 77--102. 

\bibitem{Kr1} H.~L.~Krall, {\em Certain differential equations for the Tchebycheff polynomials}, Duke Math. J. 4 (1938), 705--718.

\bibitem{Kr2} H.~L.~Krall, {\em On orthogonal polynomials satisfying a certain fourth order differential equation}, The Pennsylvania Sate College Studies, No. 6, 1940.

\bibitem{L} L.~L.~Littlejohn, {\em On the classification of differential equations having orthogonal polynomial solutions}, Ann. Mat. Pura Appl. (4)  138  (1984), 35--53.

\bibitem{MS} V.~B.~Matveev and M.~A.~Salle, {\em Darboux transformations and solitons}, Springer Series in Nonlinear Dynamics. Springer-Verlag, Berlin, 1991.

\bibitem{R} M.~Reach, {\em Generating difference equations with the Darboux
transformation}, Comm. Math. Phys. 119 (1988), no. 3, 385--402.


\end{thebibliography}
\end{document}